\numberwithin{equation}{section}
\newtheorem{theorem}{Theorem}[section]
\newtheorem{hypothesis}[theorem]{Hypothesis}
\newmdtheoremenv[nobreak]{proposition}[theorem]{Proposition} 
\newmdtheoremenv[nobreak]{corollary}[theorem]{Corollary}
\newmdtheoremenv[backgroundcolor=black!5,nobreak]{problem}[theorem]{Problem}
\newmdtheoremenv[backgroundcolor=black!5,nobreak]{definition}[theorem]{Definition}
\newcommand{\im}{\imath}
\renewcommand{\vec}[1]{{\boldsymbol{\mathbf{#1}}}}
\newcommand{\tens}[1]{{\boldsymbol{\mathbf{\mathit{#1}}}}}
\newcommand{\ALPHA}{\tens{\alpha}}
\newcommand{\EPS}{\tens{\varepsilon}}
\newcommand{\ZETA}{\tens{\zeta}}
\newcommand{\MU}{\tens{\mu}}
\newcommand{\XI}{\tens{\xi}}
\newcommand{\RHO}{\tens{\rho}}
\newcommand{\D}{{\mathrm{D}}}
\newcommand{\N}{{\mathrm{N}}}
\newcommand{\R}{{\mathrm{R}}}
\DeclareMathOperator{\curl}{\mathbf{curl}}
\DeclareMathOperator{\Curl}{\mathbf{curl}}
\DeclareMathOperator{\scurl}{\mathrm{curl}}
\DeclareMathOperator{\sdiv}{\mathrm{div}}
\newcommand{\ba}{\mathbf{a}}
\newcommand{\bb}{\mathbf{b}}
\newcommand{\be}{\mathbf{e}}
\renewcommand{\bf}{\mathbf{f}}
\newcommand{\bg}{\mathbf{g}}
\newcommand{\bh}{\mathbf{h}}
\newcommand{\bj}{\mathbf{j}}
\newcommand{\bn}{\mathbf{n}}
\newcommand{\br}{\mathbf{r}}
\newcommand{\bu}{\mathbf{u}}
\newcommand{\bv}{\mathbf{v}}
\newcommand{\bx}{\mathbf{x}}
\newcommand{\by}{\mathbf{y}}
\newcommand{\bz}{\mathbf{z}}
\newcommand{\bzero}{\mathbf{0}}
\newcommand{\BH}{\mathbf{H}}
\newcommand{\BL}{\mathbf{L}}
\newcommand{\BV}{\mathbf{V}}
\title{Analysis of time-harmonic electromagnetic problems \\ with elliptic material coefficients\footnote{Submitted for publication. Distributed under \href{https://creativecommons.org/licenses/by/4.0/}{Creative Commons CC BY 4.0} license.}}
\author[1]{Patrick Ciarlet Jr.}
\author[1]{Axel Modave}
\affil[1]{\footnotesize POEMS, CNRS, Inria, ENSTA Paris, Institut Polytechnique de Paris, 91120 Palaiseau, France. Corresponding authors: \href{mailto:patrick.ciarlet@ensta-paris.fr}{\texttt{patrick.ciarlet@ensta-paris.fr}}, \href{mailto:axel.modave@ensta-paris.fr}{\texttt{axel.modave@ensta-paris.fr}}}
\date{}
\begin{document}

\maketitle

\begin{abstract}
    \noindent
    We consider time-harmonic electromagnetic problems with material coefficients represented by elliptic fields, covering a wide range of complex and anisotropic material media.
    The properties of elliptic fields are analyzed, with particular emphasis on scalar fields and normal tensor fields.
    Time-harmonic electromagnetic problems with general elliptic material fields are then studied.
    Well-posedness results for classical variational formulations with different boundary conditions are reviewed, and hypotheses for the coercivity of the corresponding sesquilinear forms are investigated.
    Finally, the proposed framework is applied to examples of media used in the literature: isotropic lossy media, geometric media, and gyrotropic media.
\end{abstract}


\setlength{\parskip}{2mm}
\setstretch{1.1}

\section{Introduction}

The mathematical modeling of electromagnetic wave propagation in complex and anisotropic media is a very active research topic, e.g.~for the design of metamaterials.
In this article, we consider electromagnetic models defined with linear boundary value problems in the time-harmonic regime, which are often considered in practice.
Knowledge of the mathematical properties of these problems is of paramount importance for the selection and development of accurate, reliable and efficient numerical solution methods, e.g.~based on finite element methods and domain decomposition methods.

The mathematical analysis of time-harmonic electromagnetic problems has been carried out for isotropic media and certain classes of anisotropic media.
Variational formulations with symmetric positive definite material tensor fields have been studied for example in \cite{hiptmair2002finite,monk2003finite,kirsch2015mathematical,assous2018mathematical}.
Non-Hermitian material tensors have been considered in \cite{alonso1998unique} for complex symmetric tensors, and in \cite{back2015electromagnetic} for anisotropic media coming from plasma theory.
Well-posedness and regularity results have been recently obtained in \cite{chicaud2021analysis,chicaud2021phd,chicaud2023analysis} for problems with material tensor fields verifying a general ellipticity condition.

In this article, we investigate the properties of elliptic fields as defined in \cite{chicaud2021analysis,chicaud2021phd,chicaud2023analysis}, and we review and systematically analyze time-harmonic electromagnetic problems with elliptic material coefficients.
We particularly focus on settings with elliptic scalar fields (e.g.~for isotropic lossy media) and elliptic normal and Hermitian tensor fields (e.g.~for geometric and gyrotropic media).

This article is structured as follows.
In Section \ref{sec:model}, the mathematical model is described.
The elliptic material fields are defined and studied in Section \ref{sec:fields}.
In Section \ref{sec:problems}, the variational formulations of time-harmonic electromagnetic problems with Dirichlet, Neumann and Robin boundary conditions are defined with the appropriate functional spaces, well-posedness properties are reviewed, and the coercivity of the sesquilinear forms is studied.
Finally, in Section \ref{sec:applications}, the framework is applied to examples of media used in the literature: isotropic lossy media, geometric media, and gyrotropic media.

\section{Mathematical model}
\label{sec:model}

We consider time-harmonic electromagnetic problems posed in a domain $\Omega\subset\mathbb{R}^3$, which is assumed to be an open, connected, bounded region with a Lipschitz-continuous boundary $\Gamma := \partial\Omega$.
The problems are written with the time-harmonic Maxwell equations,
\begin{align}
    -\im\omega\MU \bh + \curl\be & = \bzero, \label{eqn:maxwell1} \\
    -\im\omega\EPS\be - \curl\bh & = -\bj,   \label{eqn:maxwell2}
\end{align}
where $\be$ and $\bh$ are the electric and magnetic fields, $\EPS$ and $\MU$ are the electric permittivity tensor and the magnetic permeability tensor, $\bj$ is the current density, and $\omega$ is the angular frequency.
We assume that $\omega$ is a strictly positive real constant, and we take the convention that the time-dependence of the fields is $e^{-\im\omega t}$, where $t$ is the time.
Boundary conditions are prescribed on the boundary $\Gamma$.
Standard boundary conditions are
\begin{align}
    \be\times\bn                                                 & = \bg_\be,                              \\
    \bh\times\bn                                                 & = \bg_\bh,                              \\
    \bn\times(\bh\times\bn) + \ALPHA_\mathrm{imp} (\be\times\bn) & = \bg_\mathrm{imp}, \label{eqn:bnd:imp}
\end{align}
where $\bn$ is the unit outward normal, $\ALPHA_\mathrm{imp}$ is an impedance tensor and $\bg_\be$, $\bg_\bh$ and $\bg_\mathrm{imp}$ are surface data.
The Silver-M\"uller boundary condition is a particular case of the third boundary condition.
If the material tensors are real isotropic with $\EPS=\varepsilon\tens{I}_3$ and $\MU=\mu\tens{I}_3$, the Silver-M\"uller boundary condition corresponds to the surface tensor $\ALPHA_\mathrm{imp}=\sqrt{\varepsilon/\mu}\:\tens{I}_2$, where $\tens{I}_3$ is the 3-by-3 identity tensor and $\tens{I}_2$ is the 2-by-2 identity tensor defined along the tangent plane of the boundary $\Gamma$, that is $\tens{I}_2 = \tens{I}_3 - \vec{n}\otimes\vec{n}$.
In an abuse of notation, in equation~\eqref{eqn:bnd:imp}, we assume that $\ALPHA_\mathrm{imp}$ acts only on the tangent components of $\be\times\bn$.

The $\be$-formulations of these problems are obtained by removing $\bh$ from equation~\eqref{eqn:maxwell2} and from the boundary conditions thanks to equation~\eqref{eqn:maxwell1}, which leads to the equation
\begin{align}
    \curl(\MU^{-1}\curl\be) - \omega^2\EPS\be & = \bf
    \label{eqn:MaxwellEqn}
\end{align}
and the boundary conditions
\begin{align}
    \be\times\bn                                                   & = \bg_\D,
    \label{eqn:DirichetBC}                                                     \\
    (\MU^{-1}\curl\be)\times\bn                                    & = \bg_\N,
    \label{eqn:NeumannBC}                                                      \\
    \bn\times((\MU^{-1}\curl\be)\times\bn) + \ALPHA (\be\times\bn) & = \bg_\R,
    \label{eqn:RobinBC}
\end{align}
which correspond, respectively, to Dirichlet, Neumann and Robin boundary conditions.
Let us note that $\bf=\im\omega\bj$, $\bg_\D=\bg_\be$, $\bg_\N=\im\omega\bg_\bh$ and $\bg_\R=\im\omega\bg_\mathrm{imp}$ and $\ALPHA=\im\omega\ALPHA_\mathrm{imp}$.

In this article, we consider that $\MU(\bx)$ and $\EPS(\bx)$ are general complex 3-by-3 tensor fields, and that $\ALPHA(\bx)$ is a general complex 2-by-2 tensor field.
We assume that these tensor fields verify the ellipticity condition defined in the next section.
\section{Elliptic material fields}
\label{sec:fields}

In this section, we define and study a general family of scalar and tensor material fields.
The set $D$ is a bounded subset of $\mathbb{R}^3$, $\XI(\bx)$ is a complex $d$-by-$d$ tensor field defined on $D$ (for $d=1$, $2$ or $3$), and $\tens{L}^\infty(D) := \{\XI$ Lebesgue-measurable such that $\|\XI\|_{\tens{L}^\infty(D)} < +\infty\}$ with $\|\XI\|_{\tens{L}^\infty(D)} := \mathrm{esssup}_{\bx\in D}\sup_{\bz\in\mathbb{C}^d\backslash\{0\}}|\XI\bz|/|\bz|$.
In the scalar case, i.e.~for $d=1$, we use the notations $\xi(\bx)$, $L^\infty(D)$ and $\|\xi\|_{L^\infty(D)}$.


\subsection{Definition of elliptic fields and ellipticity directions}

We consider a general family of tensor fields satisfying the following \emph{ellipticity condition}.
\begin{definition}[Elliptic tensor field]
  \label{def:ellipticityTens}
  The $d$-by-$d$ tensor field $\XI\in \tens{L}^\infty(D)$ is said to be \emph{elliptic} if
  \begin{align}
    \exists(\theta_\XI,\xi_-)\in\mathbb{R}\times\mathbb{R}_{>0}, \ \
    \text{a.e.~in $D$}, \ \
    \forall\bz\in\mathbb{C}^d, \ \
    \Re\{e^{\im\theta_\XI}\:(\XI\bz)\cdot\overline{\bz}\}\geq\xi_-|\bz|^2.
    \label{eqn:ellipticityTens}
  \end{align}
  In addition, we use the notation $\xi_+ := \|\XI\|_{\tens{L}^\infty(D)}$.
\end{definition}
\noindent
The ellipticity condition \eqref{eqn:ellipticityTens} means that there exists at least one direction of the complex plane, called an \emph{ellipticity direction}, along which the tensor is coercive a.e.
\begin{definition}[Ellipticity direction]
  An \emph{ellipticity direction of $\XI$} is an angle $\theta_\XI$ for which there exists $\xi_-\in\mathbb{R}_{>0}$ such that the pair $(\theta_\XI,\xi_-)$ fulfills the ellipticity condition~\eqref{eqn:ellipticityTens}.
\end{definition}
\noindent
If $\theta_\XI$ is an ellipticity direction, then $\theta_\XI+2m\pi$ is also an ellipticity direction for every integer $m$.
In practice, the sets of ellipticity directions can be written in a certain range to simplify comparisons, although they must be understood up to a multiple of $2\pi$.
In this work, we take the convention to write them in the range $[-\pi,\pi]$.
\begin{definition}[$\Theta$-set]
  The \emph{$\Theta$-set of $\XI$}, denoted by $\Theta_\XI$, is the set of all the ellipticity directions of $\XI$ belonging to the range $[-\pi,\pi]$.
\end{definition}
For a given ellipticity direction $\theta\in\Theta_\XI$, there exists a range of values ${]0,\xi_-^\mathrm{sup}[}$, such that, for all $\xi_-\in{]0,\xi_-^\mathrm{sup}[}$, the pair $(\theta,\xi_-)$ fulfills the ellipticity condition \eqref{eqn:ellipticityTens}.
In this sense, we say that $\xi_-^\mathrm{sup}$, and then $\xi_-$, depends on $\theta_\XI$.

\noindent
The following definition is introduced for configurations where two elliptic tensors share at least one ellipticity direction.
\begin{definition}[Simultaneous ellipticity]
  Two elliptic tensors $\XI_1$ and $\XI_2$ are said to be \emph{simultaneously elliptic} if $\Theta_{\XI_1}\cap\Theta_{\XI_2}\neq\emptyset$.
\end{definition}


\subsection[Properties of elliptic scalar fields]{Properties of elliptic scalar fields}

For scalar fields, the ellipticity condition can be simplified, and Definition \ref{def:ellipticityTens} can be rewritten as follows.
\begin{definition}[Elliptic scalar field]
  \label{def:ellipticitySca}
  The scalar field $\xi(\bx) = |\xi(\bx)|\:e^{\im\varphi_\xi(\bx)} \in L^\infty(D)$ with $\varphi_\xi(\bx)\in\mathbb{R}$ is said to be \emph{elliptic} if
  \begin{align}
    \exists(\theta_\XI,\xi_-)\in\mathbb{R}\times\mathbb{R}_{>0}, \ \
    \text{a.e.~in $D$}, \ \
    |\xi(\bx)| \cos(\theta_\xi+\varphi_\xi(\bx))\geq\xi_-.
    \label{eqn:ellipticitySca}
  \end{align}
  In addition, we use the notation $\xi_+ := \|\xi\|_{L^\infty(D)}$.
\end{definition}

The next proposition shows that a scalar field is elliptic if and only if all its values, a.e.~in $D$, belong to a fixed closed half-place of $\mathbb{C}$ that does not contain zero.
\begin{proposition}
  \label{prop:ellipticScalar}
  Let a scalar field $\xi(\bx) = |\xi(\bx)|\:e^{\im\varphi_\xi(\bx)} \in L^\infty(D)$ with $\varphi_\xi(\bx)\in\mathbb{R}$.
  The field $\xi$ is elliptic if and only if $\xi_{\inf} := \inf_{\bx\in D} |\xi(\bx)| > 0$ and there exist $\theta_\xi\in\mathbb{R}$ and $\eta_\xi\in{]0,\pi/2]}$ such that, a.e.~in $D$,
  \begin{align}
    \theta_\xi+\varphi_\xi(\bx) \ \in \ \bigcup_{n\in\mathbb{Z}} \big[2n\pi - (\pi/2-\eta_\xi) \ , \ 2n\pi + (\pi/2-\eta_\xi)\big].
    \label{eqn:ellipticity:sca}
  \end{align}
  In addition, when $\xi$ is elliptic, $\eta_\xi$ can be chosen arbitrarily close to $0$.
\end{proposition}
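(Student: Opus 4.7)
The plan is to prove the equivalence in the standard two directions, translating the scalar ellipticity inequality into a geometric constraint on $\theta_\xi + \varphi_\xi(\bx)$ modulo $2\pi$ via the cosine. The cornerstone is that \eqref{eqn:ellipticitySca} is a single pointwise bound combining the modulus $|\xi|$ and the cosine of a phase, which can be decoupled by using the essential upper bound $\xi_+$ and the essential infimum $\xi_{\inf}$.

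For the direct implication, I would take the pair $(\theta_\xi,\xi_-)$ provided by Definition~\ref{def:ellipticitySca}. Since $\cos \leq 1$, the inequality \eqref{eqn:ellipticitySca} first forces $|\xi(\bx)| \geq \xi_-$ a.e.~in $D$, whence $\xi_{\inf} \geq \xi_- > 0$. Dividing \eqref{eqn:ellipticitySca} by $\xi_+ \geq |\xi(\bx)|$ then gives $\cos(\theta_\xi+\varphi_\xi(\bx)) \geq \xi_-/\xi_+ \in\, ]0,1]$ a.e., which is equivalent to $\theta_\xi+\varphi_\xi(\bx)$ lying in a union of closed intervals centered at the points $2n\pi$ with half-width $\arccos(\xi_-/\xi_+) \in [0,\pi/2[$. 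Setting $\eta_\xi := \pi/2 - \arccos(\xi_-/\xi_+) \in\, ]0,\pi/2]$ rewrites this union as \eqref{eqn:ellipticity:sca}.

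For the converse implication, assuming $\xi_{\inf} > 0$ and \eqref{eqn:ellipticity:sca}, I would use the $2\pi$-periodicity of $\cos$ together with its monotonicity on $[0,\pi]$ to convert the inclusion into the pointwise lower bound $\cos(\theta_\xi+\varphi_\xi(\bx)) \geq \cos(\pi/2 - \eta_\xi) = \sin\eta_\xi > 0$ a.e. Multiplying by $|\xi(\bx)| \geq \xi_{\inf}$ yields \eqref{eqn:ellipticitySca} with the same $\theta_\xi$ and $\xi_- := \xi_{\inf}\,\sin\eta_\xi > 0$. The addendum is then immediate: if \eqref{eqn:ellipticity:sca} holds for some $\eta_\xi$, then shrinking $\eta_\xi$ only enlarges each interval $[2n\pi-(\pi/2-\eta_\xi),\,2n\pi+(\pi/2-\eta_\xi)]$, so the same inclusion holds for every smaller positive value, and $\eta_\xi$ may be taken arbitrarily close to $0$.

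I do not expect a real obstacle; the arguments rely only on elementary monotonicity of $\cos$. The two points requiring some care are the a.e.~handling of the quantifiers (using that the essential bounds $\xi_-$, $\xi_+$, $\xi_{\inf}$ are genuine almost-everywhere bounds) and the exact matching of the half-width $\pi/2-\eta_\xi$ with $\arccos(\xi_-/\xi_+)$ so that the intervals appearing in the algebraic and in the geometric formulations coincide as subsets of $\mathbb{R}$.
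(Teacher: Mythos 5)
Your proof is correct and follows essentially the same route as the paper: the forward direction divides the ellipticity inequality by $\xi_+$ to obtain $\cos(\theta_\xi+\varphi_\xi(\bx))\geq \xi_-/\xi_+>0$ a.e.\ and sets $\eta_\xi=\pi/2-\arccos(\xi_-/\xi_+)$, and the reverse direction multiplies the cosine lower bound $\sin\eta_\xi$ by $|\xi(\bx)|\geq\xi_{\inf}$, exactly as intended there. The only cosmetic difference is the addendum: you observe directly that decreasing $\eta_\xi$ enlarges the intervals in \eqref{eqn:ellipticity:sca}, whereas the paper shrinks the admissible constant $\xi_-$ to $\xi_-/2^k$ so that $\eta_\xi=\arcsin(\xi_-/\xi_+/2^k)\to 0$; both arguments are valid and equivalent in substance.
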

\begin{proof}
  By using Definition~\ref{def:ellipticitySca}, the reverse implication of the proposition is straightforward.
  For the direct implication, we have $\xi_+ < +\infty$ because $\xi(\bx) \in L^\infty(D)$.
  Then, Equation~\eqref{eqn:ellipticitySca} gives, a.e., $\cos(\theta_\xi+\varphi_\xi(\bx))\geq\xi_-/\xi_+ > 0$.
  Therefore, Equation~\eqref{eqn:ellipticity:sca} holds with $\eta_\xi = \pi/2 - \arccos(\xi_-/\xi_+) = \arcsin(\xi_-/\xi_+) \in{]0,\pi/2]}$.

  When $\xi$ is elliptic, let $(\theta_\xi,\xi_-)$ be an admissible pair in Equation~\eqref{eqn:ellipticitySca}.
  We can then choose $\eta_\xi = \arcsin(\xi_-/\xi_+)$.
  Nevertheless, $(\theta_\xi,\xi_-/2^k)$ is also an admissible pair for every $k\in\mathbb{N}$, and we can also choose $\eta_\xi = \eta_\xi^{(k)} := \arcsin(\xi_-/\xi_+/2^k)$.
  Because $\lim_{k\rightarrow\infty} \eta_\xi^{(k)} = 0$, $\eta_\xi$ can then be arbitrarily close to $0$.
\end{proof}

The ellipticity directions and the $\Theta$-set can be explicitly identified in certain cases.

\begin{proposition}
  \label{prop:ellipticScalar:corr1}
  Let a scalar field $\xi(\bx) = |\xi(\bx)|\:e^{\im\varphi_\xi(\bx)} \in L^\infty(D)$ with $\varphi_\xi(\bx) \in {[\varphi_{-},\varphi_{+}]}\subset\mathbb{R}$ a.e.~in $D$.
  If $\inf_{\bx\in D} |\xi(\bx)| > 0$ and $\varphi_{+}-\varphi_{-}<\pi$, then the field $\xi$ is elliptic and every angle
  \begin{align}
    \theta_\xi \ \in \ \bigcup_{n\in\mathbb{Z}} {\big] 2n\pi - \pi/2 - \varphi_- \ , \ 2n\pi + \pi/2 - \varphi_+ \big[}
    \label{eqn:ellipticity:sca2}
  \end{align}
  is an ellipticity direction.
\end{proposition}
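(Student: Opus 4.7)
The plan is to apply the scalar ellipticity condition from Definition~\ref{def:ellipticitySca} directly, fixing an arbitrary $\theta_\xi$ in the prescribed set and producing an explicit positive lower bound $\xi_-$.

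First, I would reduce to the case $n=0$ by invoking the $2\pi$-periodicity of $\cos$: since the condition $|\xi(\bx)|\cos(\theta_\xi+\varphi_\xi(\bx))\geq\xi_-$ is invariant under the shift $\theta_\xi\mapsto\theta_\xi+2n\pi$, it suffices to treat $\theta_\xi\in{]{-\pi/2-\varphi_-},\,{\pi/2-\varphi_+}[}$. Note that this interval is nonempty precisely because of the hypothesis $\varphi_+-\varphi_-<\pi$.

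Next, for such $\theta_\xi$ and for almost every $\bx\in D$, since $\varphi_\xi(\bx)\in[\varphi_-,\varphi_+]$, I would observe the two-sided bound
\begin{align}
    -\tfrac{\pi}{2} \ < \ \theta_\xi+\varphi_- \ \leq \ \theta_\xi+\varphi_\xi(\bx) \ \leq \ \theta_\xi+\varphi_+ \ < \ \tfrac{\pi}{2}.
\end{align}
Thus $\theta_\xi+\varphi_\xi(\bx)$ lies in a closed subinterval of ${]{-\pi/2},\,{\pi/2}[}$ that does not depend on $\bx$. Since $\cos$ is positive and concave on this interval, it attains its minimum at one of the endpoints, giving the uniform lower bound
\begin{align}
    \cos(\theta_\xi+\varphi_\xi(\bx)) \ \geq \ c_\xi \ := \ \min\bigl\{\cos(\theta_\xi+\varphi_-),\,\cos(\theta_\xi+\varphi_+)\bigr\} \ > \ 0.
\end{align}

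Finally, combining this with the hypothesis $\xi_{\inf}:=\inf_{\bx\in D}|\xi(\bx)|>0$ yields $|\xi(\bx)|\cos(\theta_\xi+\varphi_\xi(\bx))\geq\xi_{\inf}\,c_\xi>0$ a.e.~in $D$, so the pair $(\theta_\xi,\xi_-)$ with $\xi_-:=\xi_{\inf}\,c_\xi$ verifies the scalar ellipticity condition~\eqref{eqn:ellipticitySca}. This simultaneously shows that $\xi$ is elliptic and that the chosen $\theta_\xi$ is an ellipticity direction. There is no real obstacle in this argument; the only point that deserves care is checking that the range of $\theta_\xi+\varphi_\xi(\bx)$ is separated from $\pm\pi/2$ uniformly in $\bx$, which is exactly what the strict inequalities in~\eqref{eqn:ellipticity:sca2} and the compactness of $[\varphi_-,\varphi_+]$ ensure.
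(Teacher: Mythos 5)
Your proof is correct, and it takes a slightly different route from the paper's. The paper proves the statement by passing through the angular characterization of Proposition~\ref{prop:ellipticScalar}: it writes the condition $\theta_\xi+\varphi_\xi(\bx)\in[2m\pi-(\pi/2-\eta_\xi),2m\pi+(\pi/2-\eta_\xi)]$, observes that it is implied by $2m\pi-(\pi/2-\eta_\xi)-\varphi_-\leq\theta_\xi\leq 2m\pi+(\pi/2-\eta_\xi)-\varphi_+$, and then lets $\eta_\xi$ tend to $0$ to recover the whole open interval \eqref{eqn:ellipticity:sca2}, finishing with the same $2\pi$-periodicity remark you make at the start. You instead verify Definition~\ref{def:ellipticitySca} directly: fixing $\theta_\xi$ in the interval, you trap $\theta_\xi+\varphi_\xi(\bx)$ in the fixed compact subinterval $[\theta_\xi+\varphi_-,\theta_\xi+\varphi_+]\subset{]-\pi/2,\pi/2[}$ and bound the cosine below by its value at an endpoint (concavity, or simply continuity and compactness, suffices), which yields the admissible pair $(\theta_\xi,\xi_-)$ with the explicit constant $\xi_-=\xi_{\inf}\min\{\cos(\theta_\xi+\varphi_-),\cos(\theta_\xi+\varphi_+)\}$. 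The underlying idea — uniform separation of the phase from $\pm\pi/2$ — is the same, but your version avoids the ``$\eta_\xi$ arbitrarily close to $0$'' limiting argument and the detour through Proposition~\ref{prop:ellipticScalar}, and it has the small added benefit of producing an explicit ellipticity constant for each direction $\theta_\xi$, which is the kind of quantitative information the paper later needs for coercivity constants; the paper's route, in exchange, reuses the already established characterization and keeps the bookkeeping purely at the level of angular intervals.
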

\begin{proof}
  By Proposition~\ref{prop:ellipticScalar}, the field $\xi$ is elliptic if and only if there exist $\theta_\xi\in\mathbb{R}$ and $\eta_\xi\in{]0,\pi/2]}$ such that, for almost every $\vec{x}\in D$, there exists $m\in\mathbb{Z}$ such that
  \begin{align}
    2m\pi-(\pi/2-\eta_\xi) \leq \theta_\xi + \varphi_\xi(\bx) \leq 2m\pi+(\pi/2-\eta_\xi).
    \label{eqn:rangeEllSca:tmp1}
  \end{align}
  This condition is necessarily satisfied if $\theta_\xi$ satisfies
  \begin{align}
    2m\pi - (\pi/2-\eta_\xi) - \varphi_- \leq \theta_\xi \leq 2m\pi + (\pi/2-\eta_\xi) - \varphi_+.
  \end{align}
  Because $\eta_\xi$ can be arbitrarily close to $0$ (by Proposition~\ref{prop:ellipticScalar}), this relation holds for every angle
  \begin{align}
    \theta_{\xi} \  \in \  {\big] 2m\pi - \pi/2 - \varphi_- \ , \ 2m\pi + \pi/2 - \varphi_+ \big[}.
    \label{eqn:rangeEllSca:tmp2}
  \end{align}
  This range is not empty because $\varphi_+ - \varphi_- < \pi$.
  If $\theta_\xi=\theta_0$ satisfies \eqref{eqn:rangeEllSca:tmp1} for a given $m=n_0$, then every $\theta_\xi=\theta_0+2n\pi$ with $n\in\mathbb{Z}$ also satisfies \eqref{eqn:rangeEllSca:tmp1} for $m=n_0+n$.
  Therefore, the condition is satisfied for every angle in the range \eqref{eqn:rangeEllSca:tmp2} up to a multiple of $2\pi$.
  This holds a.e.~in $D$.
\end{proof}

\begin{proposition}
  \label{prop:ellipticScalar:corr2}
  If, in addition to the hypotheses of Prop.~\ref{prop:ellipticScalar:corr1}, we have $\varphi_{-} = \min_{\bx\in D} \varphi_\xi(\bx)$ and $\varphi_{+} = \max_{\bx\in D} \varphi_\xi(\bx)$, then there is no ellipticity direction outside the range \eqref{eqn:ellipticity:sca2}.
\end{proposition}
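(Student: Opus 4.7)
The plan is to establish the converse of Proposition~\ref{prop:ellipticScalar:corr1}: every ellipticity direction $\theta_\xi$ of $\xi$ must lie in the range~\eqref{eqn:ellipticity:sca2}. The strategy is to localise the values of $\theta_\xi + \varphi_\xi(\bx)$ inside a single ``positivity component'' of the cosine, and then use the tightness hypothesis $\varphi_\pm = \min/\max \varphi_\xi$ to pin down $\theta_\xi$ itself.

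First I would fix an ellipticity direction $\theta_\xi$ with an associated constant $\xi_- > 0$ as in Definition~\ref{def:ellipticitySca}. Dividing the ellipticity inequality by $|\xi(\bx)| \leq \xi_+$ yields, a.e.~in $D$,
$$ \cos(\theta_\xi + \varphi_\xi(\bx)) \geq \xi_-/\xi_+ =: c > 0. $$
Setting $\delta := \arcsin(c) \in {]0,\pi/2]}$, this places $\theta_\xi + \varphi_\xi(\bx)$, a.e., inside $\bigcup_{n\in\mathbb{Z}} [2n\pi - \pi/2 + \delta,\ 2n\pi + \pi/2 - \delta]$.

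Next, since $\varphi_\xi$ lies in $[\varphi_-,\varphi_+]$ a.e., the essential image of $\theta_\xi + \varphi_\xi$ is contained in a single interval of length at most $\varphi_+ - \varphi_- < \pi$. On the other hand, two consecutive closed intervals from the previous union are separated by a gap of length $\pi + 2\delta > \pi$, so a single interval of length strictly less than $\pi$ cannot intersect two of them. Hence there exists $n_0 \in \mathbb{Z}$ such that, a.e.,
$$ \theta_\xi + \varphi_\xi(\bx) \in [2n_0\pi - \pi/2 + \delta,\ 2n_0\pi + \pi/2 - \delta]. $$

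Finally, the additional hypothesis $\varphi_\pm = \min/\max\varphi_\xi$ implies that, for every $\epsilon > 0$, the sets where $\varphi_\xi < \varphi_- + \epsilon$ and where $\varphi_\xi > \varphi_+ - \epsilon$ both have positive measure. Choosing points from these sets where the ellipticity bound holds and letting $\epsilon \to 0$, I obtain
$$ 2n_0\pi - \pi/2 + \delta \leq \theta_\xi + \varphi_-, \qquad \theta_\xi + \varphi_+ \leq 2n_0\pi + \pi/2 - \delta, $$
and hence $\theta_\xi \in {]2n_0\pi - \pi/2 - \varphi_-,\ 2n_0\pi + \pi/2 - \varphi_+[}$, which is precisely the range~\eqref{eqn:ellipticity:sca2}. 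The step I expect to require the most care is this last one: reconciling the pointwise notation $\min/\max$ with the measure-theoretic nature of the ellipticity condition, which forces me to interpret $\varphi_\pm$ as essentially attained and to pass to the limit along suitable positive-measure sets rather than at a single point.
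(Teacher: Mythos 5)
Your proof is correct, but it takes a genuinely different route from the paper's. The paper argues by contradiction within one period: it assumes an ellipticity direction in the complementary arc $[\pi/2-\varphi_+,\,3\pi/2-\varphi_-]$, imposes condition \eqref{eqn:rangeEllSca:tmp1} at points where $\varphi_\xi$ equals $\varphi_-$ and $\varphi_+$, pins down the integers $m_-=0$ and $m_+=1$, and concludes because the two resulting admissible intervals are disjoint when $\varphi_+-\varphi_-<\pi$. You instead give a direct converse argument: keeping the fixed margin $\delta=\arcsin(\xi_-/\xi_+)$, you localize the essential values of $\theta_\xi+\varphi_\xi$ in a single positivity band of the cosine (your gap-length observation --- gaps of length $\pi+2\delta$ versus an image interval of length $<\pi$ --- replaces the paper's case analysis on $m_\pm$), and then squeeze $\theta_\xi+\varphi_-$ and $\theta_\xi+\varphi_+$ into that band, which yields the open interval in \eqref{eqn:ellipticity:sca2} since $\delta>0$. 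Both proofs rest on the same key fact, namely that the direction must accommodate both extreme phases within one band of width less than $\pi$, but yours is direct and avoids the ``$\eta$ arbitrarily close to $0$'' device. One caveat, which you flag yourself: the hypothesis is pointwise ($\varphi_\pm=\min/\max_{\bx\in D}\varphi_\xi$) while ellipticity is only a.e., so your claim that the sets $\{\varphi_\xi<\varphi_-+\epsilon\}$ and $\{\varphi_\xi>\varphi_+-\epsilon\}$ have positive measure is an interpretive choice (essential attainment of $\varphi_\pm$), not a consequence of the stated hypothesis --- the extrema could be attained only on a null set. The paper's proof has the mirror-image looseness (it evaluates the a.e.\ condition at the extremal points themselves), so your reading is a legitimate repair of the same mismatch rather than a gap specific to your argument.
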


\vspace{-1.5em}
\begin{proof}
  Because the ellipticity directions are defined up to a multiple of $2\pi$, it is sufficient to prove that there is no ellipticity direction in the range $[\pi/2-\varphi_+, 3\pi/2-\varphi_-]$.

  We proceed by contradiction.
  We assume that $\theta_\xi$ is an ellipticity direction that satisfies
  \begin{align}
    \pi/2-\varphi_+ \leq \theta_\xi
    \qquad\text{and}\qquad
    \theta_\xi \leq 3\pi/2-\varphi_-.
    \label{eqn:rangeEllSca:tmp7}
  \end{align}
  Returning to the proof of Proposition \ref{prop:ellipticScalar:corr1}, the angle $\theta_\xi$ must satisfy the condition \eqref{eqn:rangeEllSca:tmp1} in particular for every $\vec{x}\in D$ such that $\varphi_\xi(\vec{x})=\varphi_-$ or $\varphi_\xi(\vec{x})=\varphi_+$.

  If $\varphi_\xi(\vec{x})=\varphi_-$, this condition is satisfied if and only if there exists $m_-\in\mathbb{Z}$ such that
  \begin{align}
    2m_-\pi - (\pi/2 - \eta_-) - \varphi_- \leq \theta_\xi
    \qquad\text{and}\qquad
    \theta_\xi \leq 2m_-\pi + (\pi/2 - \eta_-) - \varphi_-,
    \label{eqn:rangeEllSca:tmp8}
  \end{align}
  for some $\eta_-\in{]0,\pi/2]}$ that can be arbitrarily close to $0$.
  Combining these inequalities with the second and first inequalities of \eqref{eqn:rangeEllSca:tmp7}, respectively, yields
  \begin{align}
    \eta_- \leq 2(1-m_-)\pi
    \qquad\text{and}\qquad
    \eta_- \leq 2m_-\pi + (\varphi_+ - \varphi_-).
  \end{align}
  These conditions are satisfied only if $m_- = 0$ and $\varphi_- < \varphi_+$.
  Then, \eqref{eqn:rangeEllSca:tmp8} gives
  \begin{align}
    \theta_\xi \ \in \ \big[-\pi/2 - \varphi_- + \eta_- \ , \ \pi/2 - \varphi_- - \eta_-\big].
    \label{eqn:rangeTmpMinus}
  \end{align}

  We proceed similarly for the case $\varphi_\xi(\vec{x})=\varphi_+$.
  Then, the condition \eqref{eqn:rangeEllSca:tmp1} is satisfied if and only if there exists $m_+\in\mathbb{Z}$ such that
  \begin{align}
    2m_+\pi - (\pi/2 - \eta_+) - \varphi_+ \leq \theta_\xi
    \qquad\text{and}\qquad
    \theta_\xi \leq 2m_+\pi + (\pi/2 - \eta_+) - \varphi_+,
    \label{eqn:rangeEllSca:tmp9}
  \end{align}
  for some $\eta_+\in{]0,\pi/2]}$ that can be arbitrarily close to $0$.
  Again, combining these inequalities with the second and first inequalities of \eqref{eqn:rangeEllSca:tmp7}, respectively, gives
  \begin{align}
    \eta_+ \leq 2(1-m_+)\pi + (\varphi_+ - \varphi_-)
    \qquad\text{and}\qquad
    \eta_+ \leq 2m_+\pi.
  \end{align}
  These conditions are satisfied only if $m_+ = 1$ and $\varphi_- < \varphi_+$.
  Then, \eqref{eqn:rangeEllSca:tmp9} gives
  \begin{align}
    \theta_\xi \ \in \ \big[3\pi/2 - \varphi_+ + \eta_+ \ , \ 5\pi/2 - \varphi_+ - \eta_+\big].
    \label{eqn:rangeTmpPlus}
  \end{align}

  Every angle $\theta_\xi$ verifying the conditions \eqref{eqn:rangeEllSca:tmp7} must belong to the ranges \eqref{eqn:rangeTmpMinus} and \eqref{eqn:rangeTmpPlus} to be an ellipticity direction.
  However, the intersection of both ranges is empty.
  Indeed, we have $\pi/2 - \varphi_- - \eta_- < 3\pi/2 - \varphi_+ + \eta_+$, which is equivalent to $\varphi_+ - \varphi_- < \pi + \eta_- + \eta_+$, which is always true because $\varphi_+ - \varphi_- < \pi$ by hypothesis and $\eta_- + \eta_+$ is strictly positive.
  Therefore, there is no ellipticity direction in the range $[\pi/2-\varphi_+, 3\pi/2-\varphi_-]$.
  %
\end{proof}

\begin{corollary}
  \label{prop:ellipticScalar:corr3}
  In addition to the hypotheses of Proposition \ref{prop:ellipticScalar:corr1},
  \begin{itemize}
    \item if $\varphi_- \in {]-3\pi/2,\pi/2]}$ and $\varphi_+\in[-\pi/2,3\pi/2[$, then
          \begin{align}
            \Theta_\xi \ \supseteq \
            {]-\pi/2 - \varphi_- , \pi/2 - \varphi_+[},
          \end{align}
    \item if $\varphi_- \in {]-3\pi/2,-\pi/2[}$ and $\varphi_+\in{]-3\pi/2,-\pi/2[}$, then
          \begin{align}
            \Theta_\xi \ \supseteq \
            {[-\pi , -3\pi/2 - \varphi_+[}
            \ \cup \
            {]-\pi/2 - \varphi_- , \pi]},
          \end{align}
    \item if $\varphi_- \in {]\pi/2,3\pi/2[}$ and $\varphi_+\in{]\pi/2,3\pi/2[}$, then
          \begin{align}
            \Theta_\xi \ \supseteq \
            {[-\pi , \pi/2 - \varphi_+[}
            \ \cup \
            {]3\pi/2 - \varphi_- , \pi]}.
          \end{align}
  \end{itemize}
  The equalities hold if $\varphi_{-} = \min_{\bx\in D} \varphi_\xi(\bx)$ and $\varphi_{+} = \max_{\bx\in D} \varphi_\xi(\bx)$.
\end{corollary}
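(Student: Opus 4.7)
The plan is to read off the statement as a purely arithmetic consequence of Propositions \ref{prop:ellipticScalar:corr1} and \ref{prop:ellipticScalar:corr2}. From Proposition \ref{prop:ellipticScalar:corr1}, every angle in
\begin{align}
  \CE \ := \ \bigcup_{n\in\mathbb{Z}}\, {]2n\pi - \pi/2 - \varphi_- \,,\, 2n\pi + \pi/2 - \varphi_+[}
\end{align}
is an ellipticity direction. By definition, $\Theta_\xi = \CE \cap [-\pi,\pi]$ modulo the $2\pi$-periodicity convention adopted after Definition \ref{def:ellipticityTens}. The inclusions in the corollary are therefore obtained by computing $\CE \cap [-\pi,\pi]$ under each hypothesis on $(\varphi_-,\varphi_+)$, and the equalities, when $\varphi_-$ and $\varphi_+$ are the essential extrema, come from Proposition \ref{prop:ellipticScalar:corr2}, which rules out any ellipticity direction outside $\CE$.

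The three cases amount to locating the endpoints of the $n=0$ interval $I_0 := {]-\pi/2-\varphi_-\,,\,\pi/2-\varphi_+[}$ relative to $[-\pi,\pi]$, and to checking that no other integer $n$ produces an interval meeting $[-\pi,\pi]$ (noting that the interval has length $\pi-(\varphi_+-\varphi_-)\in{]0,\pi[}$, so at most one $n$-shifted copy can contribute after the wrap-around). In the first case, the hypotheses $\varphi_-\in{]-3\pi/2,\pi/2]}$ and $\varphi_+\in[-\pi/2,3\pi/2[$ yield $-\pi/2-\varphi_-\in[-\pi,\pi[$ and $\pi/2-\varphi_+\in{]-\pi,\pi]}$, so $I_0\subset[-\pi,\pi]$ and no $n\neq 0$ contributes. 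In the second case, $\varphi_\pm\in{]-3\pi/2,-\pi/2[}$ forces $\pi/2-\varphi_+>\pi$, so $I_0$ overflows on the right; the portion lying above $\pi$ is ${]\pi,\pi/2-\varphi_+[}$, which after subtracting $2\pi$ (i.e., taking the $n=-1$ branch) becomes ${[-\pi,-3\pi/2-\varphi_+[}$, giving exactly the announced union. The third case is symmetric: $\varphi_\pm\in{]\pi/2,3\pi/2[}$ forces $-\pi/2-\varphi_-<-\pi$, and the overflow on the left translates, via the $n=1$ branch, into ${]3\pi/2-\varphi_-,\pi]}$.

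For the equality statement, Proposition \ref{prop:ellipticScalar:corr2} tells us that under the additional assumption $\varphi_- = \min_{\bx\in D}\varphi_\xi(\bx)$ and $\varphi_+ = \max_{\bx\in D}\varphi_\xi(\bx)$, there is no ellipticity direction outside $\CE$; hence $\Theta_\xi = \CE\cap[-\pi,\pi]$ exactly, and the three computations above then read as equalities rather than inclusions.

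I do not expect any serious obstacle: the only delicate point is the bookkeeping in cases 2 and 3, where one must identify the unique integer $n\in\{-1,+1\}$ whose shifted interval realises the overflow and verify that the chosen hypotheses on $(\varphi_-,\varphi_+)$ prevent any further interval from entering $[-\pi,\pi]$. This is guaranteed by the fact that the length of each shifted interval is strictly less than $\pi$, while the gaps between consecutive shifted intervals have length strictly greater than $\pi$.
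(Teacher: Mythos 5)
Your proposal is correct and follows essentially the same route as the paper, whose proof is simply ``Direct by using Propositions \ref{prop:ellipticScalar:corr1} and \ref{prop:ellipticScalar:corr2}'': you make explicit the intended computation of the $2\pi$-periodic union of intervals intersected with $[-\pi,\pi]$, with the wrap-around branches $n=\pm1$ accounting for the closed endpoints at $\mp\pi$ in the second and third cases, and Proposition \ref{prop:ellipticScalar:corr2} giving the equalities. The only nitpick is that the gap between consecutive shifted intervals has length $\pi+(\varphi_+-\varphi_-)\geq\pi$ (equality when $\varphi_-=\varphi_+$), not strictly greater than $\pi$, but your explicit per-case endpoint checks already make that remark unnecessary.
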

\begin{proof}
  Direct by using Propositions \ref{prop:ellipticScalar:corr1} and \ref{prop:ellipticScalar:corr2}.
\end{proof}

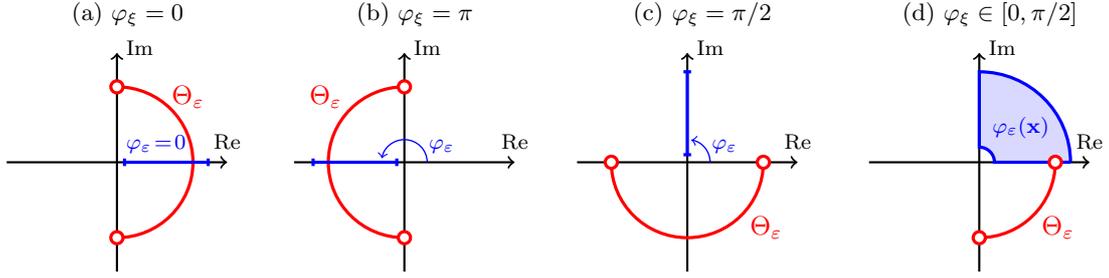
\begin{figure}[tb]
  \centering
  \begin{subfigure}[b]{0.24\textwidth}
    \centering
    \caption{$\varphi_\xi=0$}
    \label{fig:elliptScalarField:a}
    \begin{tikzpicture}
      \draw[thick, ->] (-1.45,0) -- (1.45,0);
      \draw[thick, ->] (0,-1.45) -- (0,1.45);
      \draw (1.15,0.05) node [above right] {\footnotesize$\mathrm{Re}$};
      \draw (0,1.3) node [above right] {\footnotesize$\mathrm{Im}$};
      \draw[color=red, line width=1.2pt] (0,-1) arc (-90:90:1);
      \draw[color=red, line width=1.2pt, fill=white] (0,1) circle (0.08);
      \draw[color=red, line width=1.2pt, fill=white] (0,-1) circle (0.08);
      \draw[color=blue, line width=1.2pt] (0.1,0) -- (1.2,0);
      \draw[color=blue, line width=1.2pt, -] (0.1,-0.05) -- (0.1,0.05);
      \draw[color=blue, line width=1.2pt, -] (1.2,-0.05) -- (1.2,0.05);
      \draw (0.6,0.6) node [above right] {\color{red}$\Theta_\varepsilon$};
      \draw (0,0) node [above right] {\footnotesize\color{blue}$\varphi_\varepsilon\!=\!0$};
    \end{tikzpicture}
  \end{subfigure}
  \hfill
  \begin{subfigure}[b]{0.24\textwidth}
    \centering
    \caption{$\varphi_\xi=\pi$}
    \label{fig:elliptScalarField:b}
    \begin{tikzpicture}
      \draw[thick, ->] (-1.45,0) -- (1.45,0);
      \draw[thick, ->] (0,-1.45) -- (0,1.45);
      \draw (1.15,0.05) node [above right] {\footnotesize$\mathrm{Re}$};
      \draw (0,1.3) node [above right] {\footnotesize$\mathrm{Im}$};
      \draw[color=red, line width=1.2pt] (0,1) arc (90:270:1);
      \draw[color=red, line width=1.2pt, fill=white] (0,1) circle (0.08);
      \draw[color=red, line width=1.2pt, fill=white] (0,-1) circle (0.08);
      \draw[color=blue, line width=1.2pt] (-0.1,0) -- (-1.2,0);
      \draw[color=blue, line width=1.2pt, -] (-0.1,-0.05) -- (-0.1,0.05);
      \draw[color=blue, line width=1.2pt, -] (-1.2,-0.05) -- (-1.2,0.05);
      \draw[color=blue, line width=0.5pt, ->] (0.3,0) arc (0:170:0.3);
      \draw (-0.7,0.6) node [above left] {\color{red}$\Theta_\varepsilon$};
      \draw (0.2,0) node [above right] {\footnotesize\color{blue}$\varphi_\varepsilon$};
    \end{tikzpicture}
  \end{subfigure}
  \hfill
  \begin{subfigure}[b]{0.24\textwidth}
    \centering
    \caption{$\varphi_\xi=\pi/2$}
    \begin{tikzpicture}
      \draw[thick, ->] (-1.45,0) -- (1.45,0);
      \draw[thick, ->] (0,-1.45) -- (0,1.45);
      \draw (1.15,0.05) node [above right] {\footnotesize$\mathrm{Re}$};
      \draw (0,1.3) node [above right] {\footnotesize$\mathrm{Im}$};
      \draw[color=red, line width=1.2pt] (-1,0) arc (180:360:1);
      \draw[color=red, line width=1.2pt, fill=white] (-1,0) circle (0.08);
      \draw[color=red, line width=1.2pt, fill=white] (1,0) circle (0.08);
      \draw[color=blue, line width=1.2pt] (0,0.1) -- (0,1.2);
      \draw[color=blue, line width=1.2pt, -] (-0.05,0.1) -- (0.05,0.1);
      \draw[color=blue, line width=1.2pt, -] (-0.05,1.2) -- (0.05,1.2);
      \draw[color=blue, line width=0.5pt, ->] (0.3,0) arc (0:80:0.3);
      \draw (0.7,-0.6) node [below right] {\color{red}$\Theta_\varepsilon$};
      \draw (0.2,0) node [above right] {\footnotesize\color{blue}$\varphi_\varepsilon$};
    \end{tikzpicture}
    \label{fig:elliptScalarField:c}
  \end{subfigure}
  \hfill
  \begin{subfigure}[b]{0.24\textwidth}
    \centering
    \caption{$\varphi_\xi\in{[0,\pi/2]}$}
    \label{fig:elliptScalarField:d}
    \begin{tikzpicture}
      \draw[color=blue!15, line width=0pt, fill=blue!15] (0,0) --  (90:1.2) arc(90:0:1.2) -- cycle;
      \draw[color=white, line width=0pt, fill=white] (0,0) --  (90:0.2) arc(90:0:0.2) -- cycle;
      \draw[thick, ->] (-1.45,0) -- (1.45,0);
      \draw[thick, ->] (0,-1.45) -- (0,1.45);
      \draw (1.15,0.05) node [above right] {\footnotesize$\mathrm{Re}$};
      \draw (0,1.3) node [above right] {\footnotesize$\mathrm{Im}$};
      \draw[color=blue, line width=1.2pt] (1.2,0) -- (0.2,0) arc (0:90:0.2) -- (0,0.2) -- (0,1.2) -- (0,1.2) arc (90:0:1.2);
      \draw[color=red, line width=1.2pt] (0,-1) arc (270:360:1);
      \draw[color=red, line width=1.2pt, fill=white] (0,-1) circle (0.08);
      \draw[color=red, line width=1.2pt, fill=white] (1,0) circle (0.08);
      \draw (0.7,-0.6) node [below right] {\color{red}$\Theta_\varepsilon$};
      \draw (0.05,0.2) node [above right] {\footnotesize\color{blue}$\varphi_\varepsilon(\bx)$};
    \end{tikzpicture}
  \end{subfigure}
  \caption{Representation of the set of all the ellipticity directions $\Theta_\xi$ (in red) for a scalar field $\xi(\bx) = |\xi(\bx)|\:e^{\im\varphi_\xi(\bx)}$ (in blue) with $\inf_{\bx\in D}|\xi(\bx)|>0$ and $\sup_{\bx\in D}|\xi(\bx)|<+\infty$ in several configurations.
    On figure \ref{fig:elliptScalarField:d}, we assume that $\min_{\bx\in D}\varphi_\varepsilon(\bx)=0$ and $\max_{\bx\in D}\varphi_\varepsilon(\bx)=\pi/2$.
  }
  \label{fig:elliptScalarField}
\end{figure}

Several configurations are illustrated in Figure~\ref{fig:elliptScalarField}.
If $\xi(\bx)$ is purely real, it is elliptic if and only if it is either positive a.e.~(then $\Theta_\xi={]-\pi/2,\pi/2[}$, see Figure~\ref{fig:elliptScalarField:a}) or negative a.e.~(then $\Theta_\xi={[-\pi,-\pi/2[}\cup{]\pi/2,\pi]}$, see Figure~\ref{fig:elliptScalarField:b}).
Similarly, if $\xi(\bx)$ is purely imaginary, it is elliptic if and only if the imaginary part is either positive a.e.~(then $\Theta_\xi={]-\pi,0[}$, see Figure~\ref{fig:elliptScalarField:c}) or negative a.e.~(then $\Theta_\xi={]0,\pi[}$).
For a general field $\xi(\bx) = |\xi(\bx)|\:e^{\im\varphi_\xi(\bx)}$ with $\varphi_\xi(\bx)\in{[0,\pi/2]}$ (Figure~\ref{fig:elliptScalarField:d}), we have $\Theta_\xi\supseteq{]-\pi/2,0[}$.
In addition, if $\min_{\bx\in D}|\varphi_\xi(\bx)|=0$ and $\max_{\bx\in D}|\varphi_\xi(\bx)|=\pi/2$, then there is no ellipticity direction outside $]-\pi/2,0[$, and the equality holds.


\subsection{Properties of elliptic tensor fields}


\subsubsection{Normal and Hermitian tensor fields}

For normal tensor fields, the general ellipticity condition \eqref{eqn:ellipticityTens} can be rewritten by using the eigenvalues of the tensor fields at every point.

Let $\XI\in\tens{L}^\infty(D)$ be a normal $d$-by-$d$ tensor field, and let $\bx\in D$ be such that $\XI(\bx)$ is well-defined.
Because $\XI(\bx)$ is normal, it is diagonalizable and there exists an orthonormal basis of $\mathbb{C}^d$ composed of eigenvectors of $\XI(\bx)$.
Below, the eigenvalues and associated orthonormal eigenvectors of $\XI$ at $\bx\in D$ are denoted by $\{\lambda_i(\bx)\}_{i=1\dots d}\in\mathbb{C}$ and $\{\bv_i(\bx)\}_{i=1\dots d}\in\mathbb{C}^d$, respectively.
Each eigenvalue is represented as $\lambda_i(\bx) = |\lambda_i(\bx)| e^{\im\beta_i(\bx)}$ with $\beta_i(\bx)\in\mathbb{R}$.
If, in addition, $\XI(\bx)$ is Hermitian, all the eigenvalues are real.

In the normal case, for every point $\bx\in D$ and every vector $\bz\in\mathbb{C}^d$, we have that $(\XI(\bx)\bz)\cdot\overline{\bz} = \sum_i \lambda_i(\bx)|z_i|^2$ with $\bz = \sum_i z_i\bv_i(\bx)$ and $z_i := \bz\cdot\overline{\bv_i}(\bx)$ for $i=1\dots d$.
Using this property, Definition~\ref{def:ellipticityTens} applied to normal tensor fields can be rewritten as follows.
\begin{definition}[Normal elliptic tensor field]
  \label{def:ellipticityTensNorm}
  The normal $d$-by-$d$ tensor field $\XI\in \tens{L}^\infty(D)$ is said to be \emph{elliptic} if
  \begin{align}
    \exists(\theta_\XI,\xi_-)\in\mathbb{R}\times\mathbb{R}_{>0}, \ \
    \text{a.e.~in $D$}, \ \
    |\lambda_i(\bx)| \cos(\theta_\XI+\beta_i(\bx)) \geq \xi_-,
    \ \ \text{for } i=1\dots d,
  \end{align}
  where $\{\lambda_i(\bx)\}_{i=1\dots d}$ are the eigenvalues of $\XI(\bx)$ represented as $\lambda_i(\bx) = |\lambda_i(\bx)| e^{\im\beta_i(\bx)}$ with $\beta_i(\bx)\in\mathbb{R}$ for $i=1\dots d$.
\end{definition}
\noindent
Therefore, the tensor $\XI$ is elliptic if and only if there exists a pair $(\theta_\XI,\xi_-)$ for which all the eigenvalues, a.e., satisfy the ellipticity condition.

Propositions \ref{prop:ellipticScalar}, \ref{prop:ellipticScalar:corr1} and \ref{prop:ellipticScalar:corr2} can be straightforwardly extended to normal tensor fields.
The main difference is that all the eigenvalues of $\XI(\bx)$, a.e., must satisfy the hypotheses fulfilled by the elliptic scalar fields.
\begin{proposition}[Normal elliptic tensor field]
  \label{prop:ellipticityTensNorm}
  Let $\XI\in\tens{L}^\infty(D)$ be a normal $d$-by-$d$ tensor field.
  For all $\bx\in D$, let $\{\lambda_i(\bx)\}_{i=1\dots d}\in\mathbb{C}$ be the eigenvalues of $\XI(\bx)$ represented as $\lambda_i(\bx) = |\lambda_i(\bx)| e^{\im\beta_i(\bx)}$ with $\beta_i(\bx)\in\mathbb{R}$ for $i=1\dots d$.
  \vspace{-1em}
  \begin{enumerate}
    \item[(a)]
          The tensor field $\XI$ is elliptic if and only if $\inf_{\bx\in D} |\XI(\bx)|>0$ and there exist $\theta_\XI\in\mathbb{R}$ and $\eta_\XI\in{]0,\pi/2]}$ such that, a.e.~in $D$,
          \begin{align}
            \theta_\xi + \beta_i(\bx) \ \in \ \bigcup_{n\in\mathbb{Z}} {\big[ 2n\pi - (\pi/2 - \eta_\XI) \ , \ 2n\pi + (\pi/2 - \eta_\XI) \big]},
            \ \ \text{for } i=1\dots d.
          \end{align}
          In addition, when $\XI$ is elliptic, $\eta_\XI$ can be chosen arbitrarily close to $0$.
    \item[(b)]
          If $\inf_{\bx\in D}|\XI(\bx)|>0$ and $\{\beta_i(\bx)\}_{i=1\dots d} \in [\beta_-,\beta_+]\subset\mathbb{R}$ a.e.~in $D$ with $\beta_+-\beta_-<\pi$, then $\XI$ is elliptic and every angle
          \begin{align}
            \theta_\xi \ \in \ \bigcup_{n\in\mathbb{Z}} {\big] 2n\pi - \pi/2 - \beta_- \ , \ 2n\pi + \pi/2 - \beta_+ \big[}
            \label{eqn:ellipticity:norm2}
          \end{align}
          is an ellipticity direction.
    \item[(c)]
          If, in addition to the hypotheses of (b), there exist $i_-,i_+=1\dots d$ and $\bx_-,\bx_+\in D$ such that $\beta_{i_-}(\bx_-) = \beta_-$ and $\beta_{i_+}(\bx_+) = \beta_+$, then there is no ellipticity direction outside the range \eqref{eqn:ellipticity:norm2}.
  \end{enumerate}
\end{proposition}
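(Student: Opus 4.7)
The plan is to reduce each item to its scalar analogue (Propositions \ref{prop:ellipticScalar}, \ref{prop:ellipticScalar:corr1}, \ref{prop:ellipticScalar:corr2}) applied eigenvalue-by-eigenvalue. The bridge is the identity noted just before Definition \ref{def:ellipticityTensNorm}: expanding $\bz = \sum_i z_i\bv_i(\bx)$ in the orthonormal eigenbasis of $\XI(\bx)$ gives
\begin{align}
\Re\{e^{\im\theta_\XI}(\XI(\bx)\bz)\cdot\overline{\bz}\} = \sum_{i=1}^d |\lambda_i(\bx)|\cos(\theta_\XI+\beta_i(\bx))\,|z_i|^2, \qquad |\bz|^2 = \sum_{i=1}^d |z_i|^2.
\end{align}
Testing the general condition \eqref{eqn:ellipticityTens} against $\bz=\bv_i(\bx)$ for each $i$ yields the pointwise eigenvalue conditions of Definition \ref{def:ellipticityTensNorm}, and conversely summing those scalar inequalities weighted by $|z_i|^2$ recovers \eqref{eqn:ellipticityTens}. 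The two definitions of ellipticity thus agree in the normal case, and the proof reduces to reading Definition \ref{def:ellipticityTensNorm} as a system of $d$ \emph{simultaneous} scalar ellipticity conditions sharing the same angle $\theta_\XI$ and lower bound $\xi_-$.

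For item (a), I apply Proposition \ref{prop:ellipticScalar} to each eigenvalue. A common $\eta_\XI$ can be extracted because $|\lambda_i(\bx)| \leq |\XI(\bx)| \leq \xi_+$ a.e.\ (for a normal tensor the operator norm coincides with the spectral radius), so the choice $\eta_\XI = \arcsin(\xi_-/\xi_+)$ works uniformly in $i$; the lower bound $\inf_{\bx\in D}|\XI(\bx)|>0$ comes from testing \eqref{eqn:ellipticityTens} against a unit eigenvector of maximal modulus. The shrinking of $\eta_\XI$ to $0$ proceeds exactly as in the scalar case by replacing $\xi_-$ with $\xi_-/2^k$. For item (b), the uniform bracket $\{\beta_i(\bx)\}_{i=1\dots d}\subset[\beta_-,\beta_+]$ places every one of the $d$ phase arguments in the \emph{same} interval, so the admissible range \eqref{eqn:ellipticity:norm2} produced by Proposition \ref{prop:ellipticScalar:corr1} for a single eigenvalue is automatically valid for all of them simultaneously. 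For item (c), the attainment hypotheses single out $(i_-,\bx_-)$ and $(i_+,\bx_+)$ at which $\beta_{i_\pm}(\bx_\pm) = \beta_\pm$; the contradiction argument of Proposition \ref{prop:ellipticScalar:corr2} is then replayed verbatim using the two scalar ellipticity conditions at those specific index/point pairs, in which $\beta_\pm$ play the role of $\varphi_\pm$.

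The main obstacle is purely conceptual and lies in the equivalence between the general condition \eqref{eqn:ellipticityTens} and the eigenvalue-based Definition \ref{def:ellipticityTensNorm}: one must use that $\{\bv_i(\bx)\}$ is an orthonormal basis of $\mathbb{C}^d$ so that both the quadratic form $\Re\{e^{\im\theta_\XI}(\XI\bz)\cdot\overline{\bz}\}$ and $|\bz|^2$ diagonalize simultaneously, which is precisely where normality enters. Once this identification is in place, the remainder is bookkeeping: each item becomes a statement about $d$ scalar ellipticity conditions sharing a common pair $(\theta_\XI,\xi_-)$, and the range of admissible $\theta_\XI$ is the intersection over $i$ of the single-eigenvalue ranges, which, under the hypotheses of (b), collapses to the common range \eqref{eqn:ellipticity:norm2}.
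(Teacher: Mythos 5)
Your route is the paper's own: the published proof is a one-line reduction to Propositions \ref{prop:ellipticScalar}, \ref{prop:ellipticScalar:corr1} and \ref{prop:ellipticScalar:corr2} applied eigenvalue-by-eigenvalue, and your diagonalization bridge $\Re\{e^{\im\theta_\XI}(\XI(\bx)\bz)\cdot\overline{\bz}\}=\sum_i|\lambda_i(\bx)|\cos(\theta_\XI+\beta_i(\bx))|z_i|^2$, with necessity obtained by testing against the eigenvectors and sufficiency by summing the scalar inequalities, is exactly the identity the paper invokes just before Definition \ref{def:ellipticityTensNorm} to justify that reduction. Your treatment of (b) and (c) (same phase bracket for all $i$; replaying the contradiction argument at the attainment pairs $(i_\pm,\bx_\pm)$ with $\beta_\pm$ in the role of $\varphi_\pm$) matches the intended argument.

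There is, however, one step that would fail as written. You gloss $|\XI(\bx)|$ as the operator norm, i.e.\ the spectral radius $\max_i|\lambda_i(\bx)|$, and accordingly derive the lower bound by testing \eqref{eqn:ellipticityTens} against a unit eigenvector of \emph{maximal} modulus. Under that reading the ``if'' direction of (a) --- the only place where the hypothesis $\inf_{\bx\in D}|\XI(\bx)|>0$ is actually used --- becomes false, hence unprovable: take $d=2$, $\lambda_1\equiv1$, $\lambda_2(\bx)=|\bx|$ on the unit ball, all phases $\beta_i\equiv0$; the spectral radius is identically $1$ and the phase condition holds, yet testing $\bz=\bv_2(\bx)$ near the origin rules out any uniform $\xi_->0$ in \eqref{eqn:ellipticityTens}. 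The componentwise use of Proposition \ref{prop:ellipticScalar} in (a), and of Proposition \ref{prop:ellipticScalar:corr1} in (b), requires $\inf_{\bx}|\lambda_i(\bx)|>0$ for \emph{every} $i$, so $|\XI(\bx)|$ must be understood as $\min_i|\lambda_i(\bx)|$ (the smallest singular value of the normal tensor $\XI(\bx)$); correspondingly, in the ``only if'' direction you should test \eqref{eqn:ellipticityTens} against each unit eigenvector $\bv_i(\bx)$, which gives $|\lambda_i(\bx)|\geq\Re\{e^{\im\theta_\XI}\lambda_i(\bx)\}\geq\xi_-$ for all $i$, not only for the dominant one. With that correction the sufficiency step reads $|\lambda_i(\bx)|\cos(\theta_\XI+\beta_i(\bx))\geq\big(\inf_{\bx}\min_j|\lambda_j(\bx)|\big)\sin\eta_\XI>0$, and the remainder of your proposal goes through exactly as in the scalar proofs.
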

\begin{proof}
  The proofs of (a), (b) and (c) are similar to the proofs of Propositions \ref{prop:ellipticScalar}, \ref{prop:ellipticScalar:corr1} and \ref{prop:ellipticScalar:corr2}, respectively.
\end{proof}

In the Hermitian case, all the eigenvalues are real.
As a consequence, a Hermitian tensor field is elliptic if and only if all its eigenvalues are bounded and either uniformly positive or uniformly negative a.e.
\begin{proposition}[Hermitian elliptic tensor field]
  Let $\XI\in\tens{L}^\infty(D)$ be a Hermitian d-by-d tensor field.
  The tensor field $\XI$ is elliptic if and only if all the eigenvalues of $\XI$, denoted $\{\lambda_i\}_{i=1\dots d}$, are strictly positive or strictly negative, and there exist $\xi_+,\xi_-\in\mathbb{R}_{>0}$ such that, a.e., $|\lambda_i(\bx)|\in{[\xi_-,\xi_+]}$ for $i=1\dots d$.
  Moreover, $\Theta_\XI={]-\pi/2,\pi/2[}$ if the eigenvalues are positive, and $\Theta_\XI={[-\pi,-\pi/2[}\cup{]\pi/2,\pi]}$ if they are negative.
\end{proposition}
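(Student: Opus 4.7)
The plan is to reduce the Hermitian case to the normal case already handled in Proposition~\ref{prop:ellipticityTensNorm}, exploiting the fact that a Hermitian tensor has real eigenvalues. First I would note that, writing $\lambda_i(\bx) = |\lambda_i(\bx)| e^{\im\beta_i(\bx)}$ with $\beta_i(\bx) \in [-\pi,\pi]$, the Hermitian property forces $\beta_i(\bx) = 0$ wherever $\lambda_i(\bx) > 0$ and $\beta_i(\bx) \in \{-\pi, \pi\}$ wherever $\lambda_i(\bx) < 0$. This observation is what makes the Hermitian setting much more rigid than the normal one.

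For the direct implication, assume $\XI$ is elliptic. By Proposition~\ref{prop:ellipticityTensNorm}(a), there exist $\theta_\XI\in\mathbb{R}$ and $\eta_\XI\in{]0,\pi/2]}$ such that each $\theta_\XI + \beta_i(\bx)$ lies in a strip of the form $[2n\pi - (\pi/2 - \eta_\XI), 2n\pi + (\pi/2 - \eta_\XI)]$ a.e. The key argument is that the two admissible values $\beta = 0$ and $\beta = \pi$ cannot both be accommodated by a single $\theta_\XI$, since that would require $\cos(\theta_\XI) > 0$ and $\cos(\theta_\XI + \pi) = -\cos(\theta_\XI) > 0$ simultaneously. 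Hence all eigenvalues must have the same sign almost everywhere. Finally, the ellipticity inequality gives $|\lambda_i(\bx)| \geq \xi_- / \cos(\theta_\XI + \beta_i(\bx)) > 0$, and the bound $|\lambda_i(\bx)| \leq \|\XI\|_{\tens{L}^\infty(D)} =: \xi_+$ is automatic, yielding the uniform bracket $[\xi_-, \xi_+]$.

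For the reverse implication, assume the eigenvalues are, say, all positive with $|\lambda_i(\bx)| \in [\xi_-, \xi_+]$ a.e. Choosing $\theta_\XI = 0$ reduces the ellipticity condition to $|\lambda_i(\bx)| \cos(\beta_i(\bx)) = |\lambda_i(\bx)| \geq \xi_- > 0$, so $\XI$ is elliptic by Definition~\ref{def:ellipticityTensNorm}; the negative case is identical with $\theta_\XI = \pi$.

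The computation of $\Theta_\XI$ follows by invoking Proposition~\ref{prop:ellipticityTensNorm}(b)--(c) with $\beta_- = \beta_+ = 0$ in the positive case (giving $\Theta_\XI = {]-\pi/2, \pi/2[}$), and with $\beta_- = \beta_+ = \pi$ in the negative case (which yields $\theta_\XI \in {]-3\pi/2, -\pi/2[}$ modulo $2\pi$). The only technical nuisance I foresee is the final reduction of this latter range to the canonical window $[-\pi,\pi]$: the interval straddles $-\pi$, and after identifying $-\pi$ with $\pi$ modulo $2\pi$ it splits into the disjoint union ${[-\pi, -\pi/2[} \cup {]\pi/2, \pi]}$, as stated.
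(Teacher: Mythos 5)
Your argument is correct and matches the paper's intent: the paper's proof is simply ``Direct,'' relying on the same reduction you spell out, namely specializing Proposition~\ref{prop:ellipticityTensNorm} (a)--(c) to real eigenvalues with $\beta_i\in\{0,\pm\pi\}$, ruling out mixed signs via $\cos\theta_\XI>0$ versus $-\cos\theta_\XI>0$, and translating the negative-case interval back into $[-\pi,\pi]$. Your write-up just makes explicit the steps the paper leaves to the reader, including the wrap-around of ${]-3\pi/2,-\pi/2[}$ to ${[-\pi,-\pi/2[}\cup{]\pi/2,\pi]}$.
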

\begin{proof}
  Direct.
\end{proof}


\subsubsection{General tensor fields}

For general complex tensor fields, we have the following results.

\begin{proposition}[Inverse of an elliptic tensor field]
  If the tensor field $\ZETA\in\tens{L}^\infty(D)$ is elliptic, then there exists an inverse tensor field $\RHO:=\ZETA^{-1}$ that belongs to $\tens{L}^\infty(D)$ with $\rho_+ \in {[\zeta_+^{-1},\zeta_-^{-1}]}$.
  Moreover, for any $\theta_\ZETA\in\Theta_{\ZETA}$, this tensor field satisfies the ellipticity condition with $\theta_{\RHO} = -\theta_{\ZETA}$ and $\rho_- = \zeta_-\zeta_+^{-2}$.
  In addition, $\Theta_{\RHO} = -\Theta_{\ZETA}$.
\end{proposition}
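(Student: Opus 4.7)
The plan is to work pointwise (for almost every $\bx\in D$) and exploit the ellipticity condition both as a coercivity-type estimate and as a source of invertibility.

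First I would establish existence of $\RHO(\bx)=\ZETA(\bx)^{-1}$ a.e. The ellipticity condition \eqref{eqn:ellipticityTens} together with Cauchy--Schwarz gives
\[
\zeta_-|\bz|^2 \ \leq \ \Re\{e^{\im\theta_\ZETA}(\ZETA\bz)\cdot\overline{\bz}\} \ \leq \ |\ZETA\bz|\,|\bz|,
\]
so $|\ZETA(\bx)\bz|\geq\zeta_-|\bz|$ for every $\bz\in\mathbb{C}^d$. In particular $\ZETA(\bx)$ is injective, hence invertible in finite dimension, and setting $\bz=\RHO\by$ in the previous inequality yields $|\RHO(\bx)\by|\leq\zeta_-^{-1}|\by|$, proving $\rho_+\leq\zeta_-^{-1}$. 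For the lower bound, the very definition $\zeta_+=\|\ZETA\|_{\tens{L}^\infty(D)}$ gives $|\by|=|\ZETA\RHO\by|\leq\zeta_+|\RHO\by|$ a.e., hence $|\RHO\by|\geq\zeta_+^{-1}|\by|$, and taking the essential supremum gives $\rho_+\geq\zeta_+^{-1}$.

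Next I would prove the ellipticity of $\RHO$ with exponent $-\theta_\ZETA$. The key computation is a change of variables $\bz=\RHO\by$ in the ellipticity inequality. Since
\[
e^{-\im\theta_\ZETA}(\RHO\by)\cdot\overline{\by}
\ = \ e^{-\im\theta_\ZETA}\,\bz\cdot\overline{\ZETA\bz}
\ = \ \overline{\,e^{\im\theta_\ZETA}(\ZETA\bz)\cdot\overline{\bz}\,},
\]
the real parts coincide. Applying the ellipticity of $\ZETA$ and the lower bound $|\bz|^2=|\RHO\by|^2\geq\zeta_+^{-2}|\by|^2$ from the previous step gives
\[
\Re\{e^{-\im\theta_\ZETA}(\RHO\by)\cdot\overline{\by}\}
\ = \ \Re\{e^{\im\theta_\ZETA}(\ZETA\bz)\cdot\overline{\bz}\}
\ \geq \ \zeta_-|\bz|^2
\ \geq \ \zeta_-\zeta_+^{-2}|\by|^2,
\]
which is exactly the ellipticity condition for $\RHO$ with $\theta_\RHO=-\theta_\ZETA$ and $\rho_-=\zeta_-\zeta_+^{-2}$. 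This shows $-\Theta_\ZETA\subseteq\Theta_\RHO$ (modulo reducing angles to $[-\pi,\pi]$).

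Finally, for the equality $\Theta_\RHO=-\Theta_\ZETA$, I would apply the same argument to $\RHO$ in place of $\ZETA$, noting that $\RHO^{-1}=\ZETA$ is also elliptic and belongs to $\tens{L}^\infty(D)$. This yields $-\Theta_\RHO\subseteq\Theta_\ZETA$, i.e.\ $\Theta_\RHO\subseteq-\Theta_\ZETA$, giving the reverse inclusion. The only obstacle I expect is a bookkeeping one: one must be careful when identifying ellipticity directions modulo $2\pi$ with their representatives in $[-\pi,\pi]$, since $-\Theta_\ZETA$ should be understood as the set $\{-\theta\bmod 2\pi : \theta\in\Theta_\ZETA\}\cap[-\pi,\pi]$; everything else is a clean manipulation of complex conjugates together with the two-sided bound $\zeta_+^{-1}\leq|\RHO\by|/|\by|\leq\zeta_-^{-1}$.
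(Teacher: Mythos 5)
Your proposal is correct and follows essentially the same route as the paper's proof: ellipticity plus Cauchy--Schwarz for invertibility and the two-sided bound $\zeta_+^{-1}\leq\|\RHO\|_{\tens{L}^\infty(D)}\leq\zeta_-^{-1}$, the substitution $\bz=\RHO\by$ with conjugation to get $(\theta_\RHO,\rho_-)=(-\theta_\ZETA,\zeta_-\zeta_+^{-2})$, and the symmetric argument applied to $\RHO^{-1}=\ZETA$ for the equality of $\Theta$-sets. Your explicit remark about identifying directions modulo $2\pi$ is a harmless bookkeeping point that the paper leaves implicit.
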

\vspace{-4mm}
\begin{proof}
  Let $\bx\in D$ be such that $\ZETA(\bx)$ is well-defined.
  Because of the ellipticity condition (i.e.~Equation \eqref{eqn:ellipticityTens} for $\XI=\ZETA$), we have $\ZETA(\bx)\bz=\bzero$ if and only if $\bz=\bzero$.
  Therefore, $\ZETA(\bx)$ is injective, and there exists an inverse tensor $\RHO(\bx)$ such that $\ZETA\RHO$ is the identity tensor.

  Let $\by\in\mathbb{C}^d$ and $\bz = \RHO\by$.
  Because $\ZETA\in\tens{L}^\infty(D)$, we have $|\ZETA\bz| \leq \zeta_+|\bz|$, and also $\zeta_+^{-1}|\by| \leq |\RHO\by|$ a.e.~in $D$.
  Because $\ZETA$ is elliptic, we have $\zeta_- |\bz|^2 \leq \Re\{e^{\im\theta_\ZETA}\:(\ZETA\bz)\cdot\overline{\bz}\} \leq |(\ZETA\bz)\cdot\overline{\bz}| \leq |\bz\|\ZETA\bz|$, then $\zeta_- |\bz| \leq |\ZETA\bz|$, and also $|\RHO\by| \leq \zeta_-^{-1}|\by|$ a.e.~in $D$.
  Therefore, $\zeta_+^{-1} \leq \|\RHO\|_{\tens{L}^\infty(D)} \leq \zeta_-^{-1}$ and $\RHO\in\tens{L}^\infty(D)$.

  By using again the ellipticity condition, together with
  $\Re\{e^{i\theta_\ZETA}\:(\ZETA\bz)\cdot\overline{\bz}\}
    = \Re\{e^{i\theta_\ZETA}\:\by\cdot\overline{(\RHO\by)}\}
    = \Re\{e^{-i\theta_\ZETA}\:(\RHO\by)\cdot\overline{\by}\}$
  and $\zeta_- |\bz|^2 = \zeta_- |\RHO\by|^2 \geq \zeta_- \zeta_+^{-2} |\by|^2$, we obtain that
  \begin{align}
    \exists(\theta_\ZETA,\zeta_-)\in\mathbb{R}\times\mathbb{R}_{>0}, \ \
    \text{a.e.~in $D$}, \ \
    \forall\by\in\mathbb{C}^d, \ \
    \Re\{e^{-i\theta_\ZETA}\:(\RHO\by)\cdot\overline{\by}\} \geq \zeta_- \zeta_+^{-2} |\by|^2.
  \end{align}
  Therefore, $\RHO$ is elliptic with $(\theta_\RHO,\rho_-) = (-\theta_\ZETA,\zeta_-\zeta_+^{-2})$, and $\Theta_{\RHO} \supset -\Theta_{\ZETA}$.
  Because $\ZETA = \RHO^{-1}$, we also have $\Theta_{\ZETA} \supset -\Theta_{\RHO}$, and then $\Theta_{\RHO} = -\Theta_{\ZETA}$.
\end{proof}

\begin{proposition}[Scaling of an elliptic tensor field]
  If the tensor field $\ZETA\in\tens{L}^\infty(D)$ is elliptic, then the tensor field $\RHO:=\alpha\ZETA$ with $\alpha = |\alpha| e^{\im\beta} \in \mathbb{C}\backslash\{0\}$ belongs to $\tens{L}^\infty(D)$ with $\rho_+ = |\alpha|\zeta_+$.
  Moreover, for any $\theta_\ZETA\in\Theta_{\ZETA}$, this tensor field satisfies the ellipticity condition with $\theta_{\RHO} = \theta_{\ZETA}-\beta$ and $\rho_- = |\alpha|\zeta_-$.
\end{proposition}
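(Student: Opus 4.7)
The plan is to verify the two assertions separately, treating the $\tens{L}^\infty$ bound first and then the ellipticity condition, each by a direct computation from the definitions.

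First I would check that $\RHO = \alpha\ZETA$ lies in $\tens{L}^\infty(D)$ with the claimed norm. For any $\bz\in\mathbb{C}^d\setminus\{0\}$ and for a.e.~$\bx\in D$, we have $|\RHO(\bx)\bz|/|\bz| = |\alpha|\,|\ZETA(\bx)\bz|/|\bz|$. Taking $\mathrm{esssup}_{\bx\in D}\sup_{\bz}$ on both sides pulls out the constant factor $|\alpha|$, giving $\|\RHO\|_{\tens{L}^\infty(D)} = |\alpha|\,\|\ZETA\|_{\tens{L}^\infty(D)} = |\alpha|\,\zeta_+$, i.e.~$\rho_+ = |\alpha|\zeta_+$. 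This is the routine part.

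Next I would verify the ellipticity condition for $\RHO$ with the proposed direction $\theta_\RHO = \theta_\ZETA - \beta$. Fix $\theta_\ZETA\in\Theta_\ZETA$ and the associated constant $\zeta_-$. For any $\bz\in\mathbb{C}^d$ and a.e.~$\bx\in D$, using $\alpha = |\alpha|e^{\im\beta}$,
\begin{align}
\Re\{e^{\im\theta_\RHO}\,(\RHO\bz)\cdot\overline{\bz}\}
 = \Re\{e^{\im(\theta_\ZETA-\beta)}\,|\alpha|e^{\im\beta}\,(\ZETA\bz)\cdot\overline{\bz}\}
 = |\alpha|\,\Re\{e^{\im\theta_\ZETA}\,(\ZETA\bz)\cdot\overline{\bz}\},
\end{align}
and the ellipticity of $\ZETA$ bounds this from below by $|\alpha|\zeta_-|\bz|^2$. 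Hence $(\theta_\RHO,\rho_-) = (\theta_\ZETA - \beta,\,|\alpha|\zeta_-)$ fulfills Definition~\ref{def:ellipticityTens} for $\RHO$.

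There is no real obstacle here: the whole argument is a one-line manipulation of the scalar factor $\alpha$ inside the sesquilinear form, combined with homogeneity of the operator norm. The only minor point to keep straight is the sign in $\theta_\RHO = \theta_\ZETA - \beta$, which arises because the phase $e^{\im\beta}$ coming from $\alpha$ must be cancelled by the phase of the test exponential $e^{\im\theta_\RHO}$ in order to land on the positive real axis after multiplication by $e^{\im\theta_\ZETA}$.
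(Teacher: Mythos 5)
Your proof is correct and is exactly the "direct" verification the paper intends (the paper's own proof is simply stated as ``Direct''): homogeneity of the operator norm gives $\rho_+=|\alpha|\zeta_+$, and pulling the phase $e^{\im\beta}$ out of the sesquilinear quantity with the shifted direction $\theta_\RHO=\theta_\ZETA-\beta$ gives the lower bound $|\alpha|\zeta_-|\bz|^2$. Nothing is missing.
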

\vspace{-4mm}
\begin{proof}
  Direct.
\end{proof}

\begin{proposition}[Eigenvalues of an elliptic tensor field]
  If the tensor field $\ZETA\in\tens{L}^\infty(D)$ is elliptic, then all the eigenvalues of $\ZETA(\bx)$ a.e.~in $D$ belong to a fixed closed half-plane of $\mathbb{C}$ that does not contain zero.
\end{proposition}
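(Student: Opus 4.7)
The plan is to apply the ellipticity condition \eqref{eqn:ellipticityTens} directly to eigenvectors of $\ZETA(\bx)$. The key observation is that if $\lambda \in \mathbb{C}$ is an eigenvalue of $\ZETA(\bx)$ with associated eigenvector $\bv \in \mathbb{C}^d \setminus \{\bzero\}$, then
$(\ZETA(\bx)\bv) \cdot \overline{\bv} = \lambda \, |\bv|^2$, so testing the ellipticity condition with $\bz = \bv$ converts it into a scalar constraint on $\lambda$.

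More precisely, I would fix any pair $(\theta_\ZETA, \zeta_-)$ that fulfills \eqref{eqn:ellipticityTens} and consider any $\bx \in D$ where $\ZETA(\bx)$ is well-defined (so the property holds a.e.). For any eigenpair $(\lambda, \bv)$ of $\ZETA(\bx)$, substitution into \eqref{eqn:ellipticityTens} gives
\begin{align}
    \Re\{e^{\im\theta_\ZETA} \lambda\} \, |\bv|^2 \ \geq \ \zeta_- \, |\bv|^2,
\end{align}
and dividing by $|\bv|^2 > 0$ yields $\Re\{e^{\im\theta_\ZETA} \lambda\} \geq \zeta_-$. Since $\theta_\ZETA$ and $\zeta_- > 0$ do not depend on $\bx$ nor on the particular eigenvalue considered, every eigenvalue of $\ZETA(\bx)$, a.e.~in $D$, lies in the closed half-plane
\begin{align}
    H_{\theta_\ZETA,\zeta_-} \ := \ \{w \in \mathbb{C} \ : \ \Re(e^{\im\theta_\ZETA} w) \geq \zeta_-\}.
\end{align}
This half-plane excludes $0$ because $\Re(e^{\im\theta_\ZETA} \cdot 0) = 0 < \zeta_-$, which gives the conclusion.

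I do not foresee any real obstacle: the result is essentially a one-line consequence of evaluating the sesquilinear form on an eigenvector. The only point worth noting is that eigenvalues and eigenvectors of a general complex tensor are complex, so one cannot argue via a real Rayleigh quotient; however, this causes no difficulty because \eqref{eqn:ellipticityTens} is formulated for all $\bz \in \mathbb{C}^d$, making the substitution $\bz = \bv$ legitimate regardless of whether $\ZETA(\bx)$ is normal.
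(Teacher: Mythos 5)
Your proposal is correct and uses the same key step as the paper: substituting an eigenvector $\bz=\bv$ into the ellipticity condition \eqref{eqn:ellipticityTens} to obtain $\Re\{e^{\im\theta_\ZETA}\lambda\}\geq\zeta_-$ uniformly in $\bx$ and in the eigenvalue. You simply stop at the explicit half-plane $\{w\in\mathbb{C} : \Re(e^{\im\theta_\ZETA}w)\geq\zeta_-\}$, which is a slightly more direct conclusion than the paper's additional rewriting of this inequality as a phase constraint $\cos(\theta_\ZETA+\beta)\geq\zeta_-/\zeta_+$ using the bound $\zeta_+$, but the argument is essentially identical.
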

\vspace{-4mm}
\begin{proof}
  Let $(\theta_\ZETA,\zeta_-)\in\mathbb{R}\times\mathbb{R}_{>0}$ be a pair such that the ellipticity condition is satisfied for $\ZETA$.
  Let $\lambda\in\mathbb{C}$ be an eigenvalue of $\ZETA(\bx)$ for a given point $\bx\in D$.
  Then, there exists an eigenvector $\bv\in\mathbb{C}^d\backslash\{0\}$ such that $\ZETA(\bx)\bv = \lambda\bv$.
  Because $\ZETA\in\tens{L}^\infty(D)$, we have $\zeta_+|\bv| \geq |\ZETA(\bx)\bv|$, that is $\zeta_+ \geq |\lambda|$.
  By using the ellipticity condition, we have $\Re\{e^{\im\theta_\ZETA}\:(\ZETA(\bx)\bv)\cdot\overline{\bv}\} \geq \zeta_-|\bv|^2$, and then $\Re\{e^{\im\theta_\ZETA}\:\lambda\} \geq \zeta_-$.
  In particular, $|\lambda|\geq\zeta_->0$.
  Denoting $\lambda=|\lambda| e^{\im\beta}$, we have $\cos(\theta_\zeta+\beta) \geq \zeta_-/\zeta_+ > 0$.
  Therefore,
  \begin{align}
    \theta_\zeta + \beta \ \in \ \bigcup_{n\in\mathbb{Z}} \big[2 n\pi - (\pi/2-\eta_\zeta) \ ,\  2n\pi + (\pi/2-\eta_\zeta) \big]
  \end{align}
  with $\eta_\zeta = \pi/2-\mathrm{arccos}(\zeta_-/\zeta_+) \in {]0,\pi/2]}$.
  Since the set is the same for all the eigenvalues of $\ZETA(\bx)$ a.e.~in $D$, the result holds.
\end{proof}

\begin{proposition}
  If the tensor field $\ZETA\in\tens{L}^\infty(D)$ is elliptic, then
  \begin{align}
    \forall\bv\in\BL^2(D), \ \
    \forall\theta_\ZETA\in\Theta_\ZETA, \ \
    \zeta_- \|\bv\|_{\BL^2(D)}^2
    \leq \Re\big\{e^{\im\theta_\ZETA} (\ZETA\bv,\bv)_D\big\}
    \leq \big|(\ZETA\bv,\bv)_D\big|
    \leq \zeta_+ \|\bv\|_{\BL^2(D)}^2.
  \end{align}
\end{proposition}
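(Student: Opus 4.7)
The plan is to obtain all three inequalities by integrating the corresponding pointwise estimates over $D$. The quantity $(\ZETA\bv,\bv)_D$ is the $\BL^2$ sesquilinear form $\int_D (\ZETA(\bx)\bv(\bx))\cdot\overline{\bv(\bx)}\,d\bx$, so everything reduces to pointwise reasoning on the integrand combined with monotonicity of the integral and the fact that $\Re\{\cdot\}$ commutes with integration over $D$ when applied to a complex-valued integrand.

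First I would prove the lower bound. Fix $\theta_\ZETA\in\Theta_\ZETA$; by Definition~\ref{def:ellipticityTens} there exists $\zeta_->0$ such that, for almost every $\bx\in D$ and every $\bz\in\mathbb{C}^d$, $\Re\{e^{\im\theta_\ZETA}(\ZETA(\bx)\bz)\cdot\overline{\bz}\}\geq\zeta_-|\bz|^2$. Applying this with $\bz=\bv(\bx)$ and integrating over $D$ yields
\begin{align}
\Re\bigl\{e^{\im\theta_\ZETA}(\ZETA\bv,\bv)_D\bigr\}
= \int_D \Re\bigl\{e^{\im\theta_\ZETA}(\ZETA(\bx)\bv(\bx))\cdot\overline{\bv(\bx)}\bigr\}\,d\bx
\geq \zeta_-\|\bv\|_{\BL^2(D)}^2,
\end{align}
which is the first inequality.

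The middle inequality is immediate: for any complex number $w$, $\Re\{w\}\leq|w|$, and here $|e^{\im\theta_\ZETA}(\ZETA\bv,\bv)_D|=|(\ZETA\bv,\bv)_D|$. For the rightmost inequality, I would apply Cauchy--Schwarz pointwise and then in $L^2$: a.e.\ in $D$,
\begin{align}
|(\ZETA(\bx)\bv(\bx))\cdot\overline{\bv(\bx)}|\leq|\ZETA(\bx)\bv(\bx)|\,|\bv(\bx)|\leq\zeta_+|\bv(\bx)|^2,
\end{align}
using the definition $\zeta_+=\|\ZETA\|_{\tens{L}^\infty(D)}$. Integrating this pointwise bound and then using that $|\int f|\leq\int|f|$ gives $|(\ZETA\bv,\bv)_D|\leq\zeta_+\|\bv\|_{\BL^2(D)}^2$.

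There is no real obstacle; the only thing to be careful about is the order in which one uses Cauchy--Schwarz (pointwise for the integrand, then absolute value versus integral) and the fact that the lower bound requires choosing $\theta_\ZETA$ in $\Theta_\ZETA$ precisely so that the pointwise ellipticity constant $\zeta_-$ from Definition~\ref{def:ellipticityTens} is available. Because the three inequalities follow from three independent pointwise facts, they can be presented as a single chain without any intermediate lemma.
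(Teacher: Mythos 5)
Your proof is correct and is exactly the ``direct'' argument the paper has in mind (the paper's own proof is just the word ``Direct''): integrate the pointwise ellipticity bound for the lower inequality, use $\Re\{w\}\leq|w|$ for the middle one, and combine pointwise Cauchy--Schwarz with the $\tens{L}^\infty$ bound $\zeta_+$ for the upper one. Nothing to add.
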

\vspace{-4mm}
\begin{proof}
  Direct.
\end{proof}
\section{Time-harmonic electromagnetic problems}
\label{sec:problems}

In this section, we consider the $\be$-formulation of general time-harmonic electromagnetic problems written with equation \eqref{eqn:MaxwellEqn} in a domain $\Omega$, and completed with a boundary condition on $\Gamma:=\partial\Omega$.
We consider problems with only one type of boundary condition, which is a Dirichlet condition \eqref{eqn:DirichetBC}, a Neumann condition \eqref{eqn:NeumannBC} or a Robin condition \eqref{eqn:NeumannBC}.
The problems are called Dirichlet, Neumann and Robin problems, respectively.

After a presentation of the geometric and functional framework, we define the variational formulations, and we review general well-posedness results based on \cite{chicaud2021phd, chicaud2021analysis, chicaud2023analysis}.
Then, we study conditions such that the sesquilinear forms of the variational formulations are coercive.


\subsection{Geometric and functional framework}

We make the following geometric hypotheses.
\begin{hypothesis}[Geometry \texttt{I}]
  \label{hyp:geom:1}
  The domain $\Omega$ is an open, connected and bounded subset of $\mathbb{R}^3$ with a Lipschitz-continuous boundary $\Gamma$.
\end{hypothesis}
\begin{hypothesis}[Geometry \texttt{II}]
  \label{hyp:geom:2}
  The boundary $\Gamma$ is of class $\mathcal{C}^2$ or is polyhedral without pathological vertex (for a definition, see \cite{buffa2002boundary}).
\end{hypothesis}

The \emph{tangential trace operator} and \emph{tangential components trace operator} are denoted $\gamma^T : \bv \rightarrow (\bv\times\bn)_{|\Gamma}$ and $\pi^T : \bv \rightarrow \bn\times(\bv\times\bn)_{|\Gamma}$.
Depending on the considered problem, the solution belongs to one of the following spaces
\begin{align}
  \BH(\curl,\Omega)   & := \{\bv\in\BL^2(\Omega), \: \curl\bv\in\BL^2(\Omega)\},          \\
  \BH_0(\curl,\Omega) & := \{\bv\in\BH(\curl,\Omega), \: \gamma^T\bv=0\},                 \\
  \BH^+(\curl,\Omega) & := \{\bv\in\BH(\curl,\Omega), \: \gamma^T\bv\in\BL_t^2(\Gamma)\},
\end{align}
where $\BL_t^{2}(\Gamma) := \{\bv\in\BL^2(\Gamma), \: \bv\cdot\bn=0 \: \text{ a.e.~on } \Gamma\}$.
Endowed with the norm
\begin{align}
  \left\|\bv\right\|_{\BH(\curl,\Omega)}
  := \sqrt{
  \left\|\bv\right\|^2_{\BL^2(\Omega)}
  + \left\|\Curl\bv\right\|^2_{\BL^2(\Omega)}
  },
\end{align}
the spaces $\BH(\curl,\Omega)$ and $\BH_0(\curl,\Omega)$ are Hilbert spaces.
Endowed with the norm
\begin{align}
  \left\|\bv\right\|_{\BH^+(\curl,\Omega)}
  := \sqrt{
  \left\|\bv\right\|^2_{\BL^2(\Omega)}
  + \left\|\Curl\bv\right\|^2_{\BL^2(\Omega)}
  + \left\|\gamma^T\bv\right\|^2_{\BL^2(\Gamma)}
  },
\end{align}
the space $\BH^+(\curl,\Omega)$ is a Hilbert space.
%
%
We introduce the \emph{surface operators}
\begin{align}
  \nabla_\Gamma(v_{|\Gamma}) & := \pi^T(\nabla v),                          \\
  \curl_\Gamma(v_{|\Gamma})  & := \gamma^T(\nabla v),                       \\
  \sdiv_\Gamma(\gamma^T\bv)  & := \text{dual operator of $-\nabla_\Gamma$}, \\
  \scurl_\Gamma(\pi^T\bv)    & := \text{dual operator of $\curl_\Gamma$},
\end{align}
and the \emph{surface spaces}
\begin{align}
  \BH_\parallel^{1/2}(\Gamma)               & := \pi^T(\BH^1(\Omega)),                                                                     \\
  \BH_\perp^{1/2}(\Gamma)                   & := \gamma^T(\BH^1(\Omega)),                                                                  \\
  \BH_\parallel^{-1/2}(\sdiv_\Gamma,\Gamma) & := \{\bv\in\big[\BH_\parallel^{1/2}(\Gamma)\big]', \: \sdiv_\Gamma\bv\in H^{-1/2}(\Gamma)\}, \\
  \BH_\perp^{-1/2}(\scurl_\Gamma,\Gamma)    & := \{\bv\in\big[\BH_\perp^{1/2}(\Gamma)\big]', \: \scurl_\Gamma\bv\in H^{-1/2}(\Gamma)\}.
\end{align}
For a Lipschitz boundary $\Gamma$, the trace operators $\gamma^T$ and $\pi^T$ are continuous and surjective from $\BH(\curl,\Omega)$ to the surface spaces $\BH_\parallel^{-1/2}(\sdiv_\Gamma,\Gamma)$ and $\BH_\perp^{-1/2}(\scurl_\Gamma,\Gamma)$, respectively, see~\cite{buffa2002traces}.
In addition, these surface spaces are dual spaces, and we have the classical integration by parts relation
\begin{align}
  (\bu,\curl\bv)_\Omega - (\curl\bu,\bv)_\Omega & = \langle\gamma^T\bu,\pi^T\bv\rangle_\Gamma, \quad\forall\bu,\bv\in\BH(\curl,\Omega),
\end{align}
where $(\ba,\bb)_\Omega=\int_\Omega\ba\cdot\overline{\bb}\:d\bx$, and $\langle\cdot,\cdot\rangle_\Gamma$ denotes the duality product between $\BH_\parallel^{-1/2}(\sdiv_\Gamma,\Gamma)$ and $\BH_\perp^{-1/2}(\scurl_\Gamma,\Gamma)$.


\subsection{Variational formulations}
\label{sec:globalPbm:form}

We make the following material hypotheses.
\begin{hypothesis}[Medium \texttt{I}]
  \label{hyp:mat:1}
  The electric permittivity $\EPS$ and the magnetic permeability $\MU$ are elliptic complex 3-by-3 tensor fields belonging to $\tens{L}^\infty(\Omega)$.
\end{hypothesis}
\begin{hypothesis}[Medium \texttt{II}]
  \label{hyp:mat:2}
  The impedance $\ALPHA$ is an elliptic complex 2-by-2 tensor field belonging to $\tens{L}_t^\infty(\Gamma)$.
  It verifies additional regularity hypotheses given in \cite[Section 4]{chicaud2023analysis}.
\end{hypothesis}

The variational formulations of the Dirichlet, Neumann and Robin problems are written as follows.
Under Hypotheses \ref{hyp:geom:1}, \ref{hyp:geom:2}, \ref{hyp:mat:1} and \ref{hyp:mat:2} (for the Robin case), these variational formulations are equivalent to the corresponding differential formulations.
See \cite[Theorem 5.1]{chicaud2023analysis} for the Robin case.

\begin{problem}[Dirichlet problem]
\label{pbm:dirichlet:globalVF}
Find $\be = \be_0+\be_\mathrm{lift}$ with $\be_0\in\BH_0(\curl,\Omega)$ such that
\begin{align}
  a_\D(\be_0,\bv) = \ell_\D(\bv),
  \quad\forall\bv\in\BH_0(\curl,\Omega),
\end{align}
where the sesquilinear form $a_\D$ and the antilinear form $\ell_\D$ are defined as
\begin{align}
  a_\D(\bu,\bv)
   & := (\MU^{-1}\curl\bu,\curl\bv)_\Omega
  - \omega^2 (\EPS\bu,\bv)_\Omega,
  \label{eqn:dirichlet:sesqform}
  \\
  \ell_\D(\bv)
   & := (\bf + \omega^2 \EPS\be_\mathrm{lift},\bv)_\Omega
  - (\MU^{-1}\curl\be_\mathrm{lift},\curl\bv)_\Omega, \nonumber
\end{align}
where $\be_\mathrm{lift}\in\BH(\curl,\Omega)$ is a lift function such that $\be_\mathrm{lift}\times\bn = \bg_\D$, for given data $\bf\in\BL^2(\Omega)$ and $\bg_\D\in\BH^{-1/2}_\parallel(\sdiv_\Gamma,\Gamma)$.
\end{problem}

\begin{problem}[Neumann problem]
\label{pbm:neumann:globalVF}
Find $\be\in\BH(\curl,\Omega)$ such that
\begin{align}
  a_\N(\be,\bv) = \ell_\N(\bv),
  \quad\forall\bv\in\BH(\curl,\Omega),
\end{align}
where the sesquilinear form $a_\N$ and the antilinear form $\ell_\N$ are defined as
\begin{align}
  a_\N(\bu,\bv)
   & := (\MU^{-1}\curl\bu,\curl\bv)_\Omega
  - \omega^2 (\EPS\bu,\bv)_\Omega,
  \label{eqn:neumann:sesqform}
  \\
  \ell_\N(\bv)
   & := (\bf,\bv)_\Omega
  + \langle\bg_\N,\pi^T\bv\rangle_\Gamma, \nonumber
\end{align}
for given data $\bf\in\BL^2(\Omega)$ and $\bg_\N\in\BH^{-1/2}_\perp(\scurl_\Gamma,\Gamma)$.
\end{problem}

\begin{problem}[Robin problem]
\label{pbm:robin:globalVF}
Find $\be\in\BH^+(\curl,\Omega)$ such that
\begin{align}
  a_\R(\be,\bv) = \ell_\R(\bv),
  \quad\forall\bv\in\BH^+(\curl,\Omega),
\end{align}
where the sesquilinear form $a_\R$ and the antilinear form $\ell_\R$ are defined as
\begin{align}
  a_\R(\bu,\bv)
   & := (\MU^{-1}\curl\bu,\curl\bv)_\Omega
  - \omega^2 (\EPS\bu,\bv)_\Omega
  - \langle\ALPHA\gamma^T\bu,\gamma^T\bv\rangle_\Gamma,
  \label{eqn:robin:sesqform}
  \\
  \ell_\R(\bv)
   & := (\bf,\bv)_\Omega
  - \langle\bg_\R,\gamma^T\bv\rangle_\Gamma, \nonumber
\end{align}
for given data $\bf\in\BL^2(\Omega)$ and $\bg_\R\in\BH^{-1/2}_\parallel(\sdiv_\Gamma,\Gamma) \cap \BH^{-1/2}_\perp(\scurl_\Gamma,\Gamma)$.
\end{problem}

Let us note that the Robin boundary condition regularizes the solution at the boundary, see e.g.~\cite{chicaud2023analysis}.
This ensures that $\gamma^T\be\in\BL^2_t(\Gamma)$ and $\pi^T(\MU^{-1}\curl\be)\in\BL^2_t(\Gamma)$, which justifies the choice of the solution space $\BH^+(\curl,\Omega)$.


\subsection{Well-posedness of the problems}
\label{sec:globalPbm:wellposed}

Time-harmonic electromagnetic problems can exhibit resonance phenomena, which mathematically correspond to non-unique solutions, while we generally seek a unique solution.
Therefore, two definitions of the well-posedness are generally used.
To simplify the presentation, we introduce the following abstract problem:
\begin{align}
  \left|\ \text{Find $\bu\in\BV$ such that $a(\bu,\bv)=\ell(\bv)$ for all $\bv\in\BV$,}\right.
  \label{eqn:genPbm}
\end{align}
with a Hilbert space $(\BV,\|\cdot\|_\BV)$, a continuous sesquilinear form $a:\BV\times\BV\rightarrow\mathbb{C}$ and a continuous antilinear form $\ell:\BV\rightarrow\mathbb{C}$.

\vspace{2mm}

\begin{definition}[Well-posedness in the Hadamard sense]
  The problem \eqref{eqn:genPbm} is \emph{well-posed in the Hadamard sense} if it admits a unique solution $\bu\in\BV$ which depends continuously on the data, i.e.~there exists $C\in\mathbb{R}_{>0}$ such that $\|\bu\|_\BV \leq C\:\|\ell\|_{\BV'}$.
\end{definition}

\begin{definition}[Well-posedness in the Fredholm sense]
  The problem \eqref{eqn:genPbm} is \emph{well-posed in the Fredholm sense} if either (1) the problem admits a unique solution $\bu\in\BV$ which depends continuously on the data, or (2) the problem has solutions if and only if the data satisfy a finite number of compatibility conditions.
  In the second case, the solutions form an affine space of finite dimension, and the component of the solution that is $\BV$-orthogonal to the corresponding linear space depends continuously on the data.
\end{definition}

The Dirichlet and Neumann problems (i.e.~Problems \ref{pbm:dirichlet:globalVF} and \ref{pbm:neumann:globalVF}) are well-posed in the Fredholm sense under the hypotheses \ref{hyp:geom:1}, \ref{hyp:geom:2} and \ref{hyp:mat:1}, see \cite[Theorem 5.3]{chicaud2021analysis} and \cite[Theorem 4.2]{chicaud2021analysis}.
For the Robin problem (i.e.~Problem \ref{pbm:robin:globalVF}), the well-posedness is proven in \cite[Theorem 5.3]{chicaud2023analysis} under the additional hypothesis \ref{hyp:mat:1} and the following additional hypothesis.
\begin{hypothesis}[Medium \texttt{III}]
  \label{hyp:mat:3}
  The tensor fields $-\MU^{-1}$ and $\ALPHA$ are simultaneously elliptic.
\end{hypothesis}

If a problem is well-posed in the Fredholm sense, the uniqueness of the solution implies the well-posedness in the Hadamard sense.
The uniqueness of the solution can be proven for the Robin problem if the tensor fields $\EPS$ and $\MU$ are real symmetric positive definite.
It is a consequence of a unique continuation principle~\cite{vogelsang1991strong}.
Unfortunately, to the best of our knowledge, no similar result is available for more general tensor fields.

On the other hand, by the Lax-Milgram lemma, the problem \eqref{eqn:genPbm} is well-posed in the Hadamard sense if the sesquilinear form $a$ is coercive.
In addition, the continuous dependency on the data holds with the constant $C=C_{\mathrm{coe}}^{-1}$, where $C_{\mathrm{coe}}$ is the coercivity constant defined below.
\begin{definition}[Coercivity]
  A sesquilinear form $a:\BV\times\BV\rightarrow\mathbb{C}$ is \emph{coercive} if there exist $C_{\mathrm{coe}}\in\mathbb{R}_{>0}$ and $\theta_{\mathrm{coe}}\in\mathbb{R}$ such that
  \begin{align}
    \Re\{-e^{\im\theta_{\mathrm{coe}}}a(\bu,\bu)\} \geq C_{\mathrm{coe}}\|\bu\|_{\BV}^2,
    \quad \forall\bu\in\BV.
    \label{eqn:coercivity}
  \end{align}
\end{definition}
The coercivity of the sesquilinear forms of the considered problems is investigated in the next section.



\subsection{Coercivity of the sesquilinear forms}

Under an additional hypothesis on the material fields $\EPS$ and $-\MU^{-1}$, we can prove that the sesquilinear forms of the Dirichlet, Neumann and Robin problems are coercive.
\begin{hypothesis}[Medium \texttt{IV}]
  \label{hyp:mat:4}
  The tensor fields $\EPS$ and $-\MU^{-1}$ are simultaneously elliptic.
\end{hypothesis}
\begin{proposition}[Coercivity; Dirichlet and Neumann cases]
  \label{thm:wellpos:coercive:DirNeu}
  Assume that Hypotheses~\ref{hyp:mat:1} and \ref{hyp:mat:4} hold.
  Let $\Theta$ be the intersection of the $\Theta$-sets of $\EPS$ and $-\MU^{-1}$.
  Then, the sesquilinear forms \eqref{eqn:dirichlet:sesqform} and \eqref{eqn:neumann:sesqform} are coercive for all $\theta_{\mathrm{coe}}\in\Theta$ with the coercivity constant $C_{\mathrm{coe}} = \min(\mu_-\mu_+^{-2},\omega^2 \varepsilon_-)$, which depends on $\theta_{\mathrm{coe}}$.
\end{proposition}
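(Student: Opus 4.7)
The strategy is a direct plug-and-chug: expand $-e^{\im\theta_{\mathrm{coe}}} a(\bu,\bu)$ into its two summands and bound each below using the ellipticity results already proven in Section~\ref{sec:fields}. The sesquilinear forms \eqref{eqn:dirichlet:sesqform} and \eqref{eqn:neumann:sesqform} have the identical algebraic expression $(\MU^{-1}\curl\bu,\curl\bu)_\Omega - \omega^2(\EPS\bu,\bu)_\Omega$, so a single argument covers both cases; the only difference is the underlying Hilbert space, which plays no role since $\|\bu\|_{\BV}=\|\bu\|_{\BH(\curl,\Omega)}$ for both $\BV=\BH(\curl,\Omega)$ and $\BV=\BH_0(\curl,\Omega)$.

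First, I would write
\begin{align}
    -e^{\im\theta_{\mathrm{coe}}}\,a(\bu,\bu)
    = e^{\im\theta_{\mathrm{coe}}}\bigl((-\MU^{-1})\curl\bu,\curl\bu\bigr)_\Omega
    + \omega^2\,e^{\im\theta_{\mathrm{coe}}}(\EPS\bu,\bu)_\Omega,
\end{align}
and take real parts termwise. For the second term, $\theta_{\mathrm{coe}}\in\Theta_\EPS$, so the last proposition of Section~\ref{sec:fields} applied to $\EPS$ gives
$\omega^2\,\Re\{e^{\im\theta_{\mathrm{coe}}}(\EPS\bu,\bu)_\Omega\}\geq \omega^2\varepsilon_-\|\bu\|_{\BL^2(\Omega)}^2$.

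For the first term, I need to know that $-\MU^{-1}$ is elliptic with the right constant. By the Inverse-of-an-elliptic-tensor-field proposition, $\MU^{-1}$ is elliptic with lower constant $\mu_-\mu_+^{-2}$, and then by the Scaling proposition with $\alpha=-1=e^{\im\pi}$ we get that $-\MU^{-1}$ is elliptic with the same lower constant $\mu_-\mu_+^{-2}$. Thus, since $\theta_{\mathrm{coe}}\in\Theta_{-\MU^{-1}}$, applying the same last proposition of Section~\ref{sec:fields} to $-\MU^{-1}$ and $\bv=\curl\bu$ yields
$\Re\bigl\{e^{\im\theta_{\mathrm{coe}}}((-\MU^{-1})\curl\bu,\curl\bu)_\Omega\bigr\}\geq \mu_-\mu_+^{-2}\,\|\curl\bu\|_{\BL^2(\Omega)}^2$.

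Adding the two estimates and bounding below by the smaller of the two constants yields
\begin{align}
    \Re\{-e^{\im\theta_{\mathrm{coe}}} a(\bu,\bu)\}
    \geq \min(\mu_-\mu_+^{-2},\omega^2\varepsilon_-)\bigl(\|\curl\bu\|_{\BL^2(\Omega)}^2+\|\bu\|_{\BL^2(\Omega)}^2\bigr),
\end{align}
which is exactly the coercivity inequality \eqref{eqn:coercivity} with $C_{\mathrm{coe}}=\min(\mu_-\mu_+^{-2},\omega^2\varepsilon_-)$ on both $\BH(\curl,\Omega)$ and $\BH_0(\curl,\Omega)$. There is no real obstacle here: the nontrivial content sits in Hypothesis~\ref{hyp:mat:4}, which guarantees $\Theta\neq\emptyset$ and therefore guarantees that a common rotation angle $\theta_{\mathrm{coe}}$ exists; once such an angle is fixed, the only bookkeeping step is to track correctly that the effective lower ellipticity constant of $-\MU^{-1}$ is $\mu_-\mu_+^{-2}$, obtained by chaining the Inverse and Scaling propositions.
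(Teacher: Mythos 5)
Your proof is correct and follows essentially the same route as the paper: split $-e^{\im\theta_{\mathrm{coe}}}a(\bu,\bu)$ into its two terms, bound each from below using the common ellipticity direction $\theta_{\mathrm{coe}}\in\Theta$, and take the minimum of the two constants over the $\BH(\curl,\Omega)$-norm. Your explicit chaining of the Inverse and Scaling propositions to justify the lower constant $\mu_-\mu_+^{-2}$ for $-\MU^{-1}$ merely spells out what the paper asserts directly, so there is no substantive difference.
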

\begin{proof}
  Let a given $\theta\in\Theta$.
  Because $\EPS$ and $-\MU^{-1}$ are elliptic for the ellipticity direction $\theta$, there exist strictly positive real constants $\varepsilon_-$ and $\mu_-$ such that, a.e., for all $\bz_\varepsilon\in\mathbb{C}^3$ and $\bz_\mu\in\mathbb{C}^3$,
  \begin{align}
    \Re\{e^{\im\theta}(\EPS\bz_\varepsilon)\cdot\overline{\bz_\varepsilon}\}
    \geq \varepsilon_- |\bz_\varepsilon|^2
    \quad\text{and}\quad
    -\Re\{e^{\im\theta}(\MU^{-1}\bz_\mu)\cdot\overline{\bz_\mu}\}
    \geq \mu_-\mu_+^{-2} |\bz_\mu|^2
  \end{align}
  with $\mu_+ := \|\MU\|_{\BL^\infty(\Omega)}$.
  Then, for all $\bu\in\BH(\curl,\Omega)$, we have
  \begin{align}
    \Re\big\{e^{\im\theta} \omega^2 (\EPS\bu,\bu)_\Omega\big\}
     & \geq \omega^2 \varepsilon_- \left\|\bu\right\|^2_{\BL^2(\Omega)}, \\
    -\Re\big\{e^{\im\theta}(\MU^{-1}\curl\bu,\curl\bu)_\Omega\big\}
     & \geq \mu_-\mu_+^{-2} \left\|\curl\bu\right\|^2_{\BL^2(\Omega)},
  \end{align}
  and then
  \begin{align}
    \Re\big\{-e^{\im\theta}a(\bu,\bu)\big\}
     & \geq \mu_-\mu_+^{-2} \left\|\curl\bu\right\|^2_{\BL^2(\Omega)}
    + \omega^2 \varepsilon_- \left\|\bu\right\|^2_{\BL^2(\Omega)}                                  \\
     & \geq \min(\mu_-\mu_+^{-2},\omega^2 \varepsilon_-) \left\|\bu\right\|^2_{\BH(\Curl,\Omega)}.
  \end{align}
  Therefore, the coercivity condition holds for every $\theta_{\mathrm{coe}}=\theta\in\Theta$.
\end{proof}

The following general result is adapted from \cite{chicaud2023analysis}.
%

\begin{proposition}[Coercivity; Robin case]
  \label{thm:wellpos:coercive}
  Assume that Hypotheses~\ref{hyp:mat:1}, \ref{hyp:mat:2}, \ref{hyp:mat:3} and \ref{hyp:mat:4} hold.
  Let $\Theta$ be the intersection of the $\Theta$-sets of $\EPS$, $-\MU^{-1}$ and $\ALPHA$.
  Then, the sesquilinear form \eqref{eqn:robin:sesqform} are coercive for all $\theta_{\mathrm{coe}}\in\Theta$ with the coercivity constant $C_{\mathrm{coe}} = \min(\mu_-\mu_+^{-2},\omega^2 \varepsilon_-,\alpha_-)$, which depends on $\theta_{\mathrm{coe}}$.
\end{proposition}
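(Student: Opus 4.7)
The plan is to closely mirror the proof of Proposition \ref{thm:wellpos:coercive:DirNeu}, splitting the sesquilinear form into three pieces and applying the ellipticity of each material field separately along a common ellipticity direction $\theta\in\Theta$. The key new ingredient, compared to the Dirichlet/Neumann case, is that Hypotheses \ref{hyp:mat:3} and \ref{hyp:mat:2} are what make it possible to include $\ALPHA$ in the intersection and to treat the boundary term as an $\BL_t^2(\Gamma)$ inner product rather than just an abstract duality pairing.

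First, I would fix an arbitrary $\theta\in\Theta$ and write
\begin{align}
  -e^{\im\theta}a_\R(\bu,\bu)
  = -e^{\im\theta}(\MU^{-1}\curl\bu,\curl\bu)_\Omega
  + e^{\im\theta}\omega^2(\EPS\bu,\bu)_\Omega
  + e^{\im\theta}\langle\ALPHA\gamma^T\bu,\gamma^T\bu\rangle_\Gamma.
\end{align}
For the two volume terms, the last proposition of Section \ref{sec:fields} (the integral ellipticity bound) gives, exactly as in the proof of Proposition \ref{thm:wellpos:coercive:DirNeu},
\begin{align}
  \Re\big\{-e^{\im\theta}(\MU^{-1}\curl\bu,\curl\bu)_\Omega\big\}
  \geq \mu_-\mu_+^{-2}\,\|\curl\bu\|^2_{\BL^2(\Omega)},
  \quad
  \Re\big\{e^{\im\theta}\omega^2(\EPS\bu,\bu)_\Omega\big\}
  \geq \omega^2\varepsilon_-\,\|\bu\|^2_{\BL^2(\Omega)}.
\end{align}

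The step that requires care is the boundary term. Since we work in $\BH^+(\curl,\Omega)$, we have by definition $\gamma^T\bu\in\BL_t^2(\Gamma)$, and by Hypothesis \ref{hyp:mat:2} also $\ALPHA\gamma^T\bu\in\BL_t^2(\Gamma)$. Thus the duality pairing reduces to the $\BL^2(\Gamma)$ inner product, and the pointwise ellipticity of $\ALPHA$ along the same direction $\theta$ (which is permitted because $\theta\in\Theta_\ALPHA$) yields
\begin{align}
  \Re\big\{e^{\im\theta}\langle\ALPHA\gamma^T\bu,\gamma^T\bu\rangle_\Gamma\big\}
  = \Re\big\{e^{\im\theta}(\ALPHA\gamma^T\bu,\gamma^T\bu)_\Gamma\big\}
  \geq \alpha_-\,\|\gamma^T\bu\|^2_{\BL^2(\Gamma)}.
\end{align}
Adding the three lower bounds and using the definition of the $\BH^+(\curl,\Omega)$ norm gives
\begin{align}
  \Re\big\{-e^{\im\theta}a_\R(\bu,\bu)\big\}
  \geq \min\!\big(\mu_-\mu_+^{-2},\,\omega^2\varepsilon_-,\,\alpha_-\big)\,\|\bu\|^2_{\BH^+(\curl,\Omega)},
\end{align}
which is the claimed coercivity with $\theta_{\mathrm{coe}}=\theta$.

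The only delicate point, and the one where Hypothesis \ref{hyp:mat:3} plays a conceptual role, is that $\Theta$ must be nonempty for the statement to have content; Hypothesis \ref{hyp:mat:4} ensures $\Theta_\EPS\cap\Theta_{-\MU^{-1}}\neq\emptyset$, and Hypothesis \ref{hyp:mat:3} is the additional requirement that $\Theta_\ALPHA$ can be intersected simultaneously with $\Theta_{-\MU^{-1}}$ (and hence, combined with Hypothesis \ref{hyp:mat:4}, with $\Theta_\EPS$ as well). Apart from this bookkeeping of ellipticity directions and the reinterpretation of the boundary duality pairing as an $\BL^2$ inner product on $\BH^+(\curl,\Omega)$, the argument is a direct termwise application of the integral ellipticity bound, requiring no further analytic input.
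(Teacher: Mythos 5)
Your argument is correct and is essentially the paper's own proof: fix $\theta\in\Theta$, apply the ellipticity of $\EPS$, $-\MU^{-1}$ and $\ALPHA$ termwise to the two volume contributions and the boundary contribution, and take the minimum of the three constants against the $\BH^+(\curl,\Omega)$ norm, your only addition being the (correct) observation that for $\bu\in\BH^+(\curl,\Omega)$ the boundary duality pairing reduces to the $\BL^2_t(\Gamma)$ inner product, which the paper uses implicitly. One caution on your closing remark: Hypotheses~\ref{hyp:mat:3} and \ref{hyp:mat:4} only give pairwise nonempty intersections, which does not by itself guarantee that the triple intersection $\Theta$ is nonempty; since the proposition is quantified over $\theta_{\mathrm{coe}}\in\Theta$, this does not affect the validity of the proof.
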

\begin{proof}
  Let a given $\theta\in\Theta$.
  Because $\EPS$, $-\MU^{-1}$ and $\ALPHA$ are elliptic for the ellipticity direction $\theta$, there exist strictly positive real constants $\varepsilon_-$, $\mu_-$ and $\alpha_-$ such that, a.e., for all $\bz_\varepsilon\in\mathbb{C}^3$, $\bz_\mu\in\mathbb{C}^3$ and $\bz_\alpha\in\mathbb{C}^2$,
  \begin{align}
    \Re\{e^{\im\theta}(\EPS\bz_\varepsilon)\cdot\overline{\bz_\varepsilon}\}
     & \geq \varepsilon_- |\bz_\varepsilon|^2,
    \\
    -\Re\{e^{\im\theta}(\MU^{-1}\bz_\mu)\cdot\overline{\bz_\mu}\}
     & \geq \mu_-\mu_+^{-2} |\bz_\mu|^2,
    \\
    \Re\{e^{\im\theta}(\ALPHA\bz_\alpha)\cdot\overline{\bz_\alpha}\}
     & \geq \alpha_- |\bz_\alpha|^2,
  \end{align}
  with $\mu_+ := \|\MU\|_{\BL^\infty(\Omega)}$.
  Then, for all $\bu\in\BH^+(\curl,\Omega)$, we have
  \begin{align}
    \Re\big\{e^{\im\theta} \omega^2 (\EPS\bu,\bu)_\Omega\big\}
     & \geq \omega^2 \varepsilon_- \left\|\bu\right\|^2_{\BL^2(\Omega)}, \\
    -\Re\big\{e^{\im\theta}(\MU^{-1}\curl\bu,\curl\bu)_\Omega\big\}
     & \geq \mu_-\mu_+^{-2} \left\|\curl\bu\right\|^2_{\BL^2(\Omega)},   \\
    \Re\big\{e^{\im\theta} (\ALPHA\gamma^T\bu,\gamma^T\bu)_\Gamma\big\}
     & \geq \alpha_- \left\|\bu\right\|^2_{\BL^2_t(\Gamma)},
  \end{align}
  and then
  \begin{align}
    \Re\big\{-e^{\im\theta}a(\bu,\bu)\big\}
     & \geq \mu_-\mu_+^{-2} \left\|\curl\bu\right\|^2_{\BL^2(\Omega)}
    + \omega^2 \varepsilon_- \left\|\bu\right\|^2_{\BL^2(\Omega)}
    + \alpha_- \left\|\bu\right\|^2_{\BL^2_t(\Gamma)}                                                         \\
     & \geq \min(\mu_-\mu_+^{-2},\omega^2 \varepsilon_-,\alpha_-) \left\|\bu\right\|^2_{\BH^+(\Curl,\Omega)}.
  \end{align}
  Therefore, the coercivity condition holds for every $\theta_{\mathrm{coe}}=\theta\in\Theta$.
\end{proof}
\section{Examples of problems with elliptic material coefficients}
\label{sec:applications}

In this section, we discuss time-harmonic electromagnetic problems with material media used in the literature.
We consider isotropic lossy media, geometric media (e.g.~media used in perfectly matched layers), and gyrotropic media (e.g.~media used in cold plasma models).


\subsection{Isotropic lossy media}

In isotropic lossy media, the electric permittivity and magnetic permeability tensor fields can be written as $\EPS = \varepsilon_c\tens{I}_3$ and $\MU = \mu\tens{I}_3$, respectively, with the \emph{complex dielectric field} $\varepsilon_c(\vec{x}) := \varepsilon(\vec{x}) + \im\sigma(\vec{x})/\omega\in\mathbb{C}$, where the coefficients $\varepsilon(\vec{x})$, $\mu(\vec{x})$ and $\sigma(\vec{x})$ are strictly positive real fields.
For the Robin problem, we consider an isotropic impedance $\ALPHA = \alpha\tens{I}_2$ with a \emph{complex impedance field} $\alpha(\vec{x})\in\mathbb{C}$.
We assume that all the scalar fields are bounded.

We can prove that the sesquilinear form of the Dirichlet, Neumann and Robin problems are coercive under some hypotheses.
For sufficiently smooth fields $\bu$ and $\bv$, the sesquilinear form of the Dirichlet and Neumann problems can be rewritten as
\begin{align}
    a(\bu,\bv)
    = (\mu^{-1}\curl\bu,\curl\bv)_\Omega
    - \omega^2 (\varepsilon_c\bu,\bv)_\Omega,
    \label{eqn:isotrop:dirichlet:sesqform}
\end{align}
and the one of the Robin problem reads
\begin{align}
    a^+(\bu,\bv)
    = a(\bu,\bv)
    - (\alpha \gamma^T\bu,\gamma^T\bv)_\Gamma.
    \label{eqn:isotrop:robin:sesqform}
\end{align}
Below, the complex fields $\varepsilon_c(\vec{x})$ and $\alpha(\vec{x})$ are written in polar form as
\begin{align}
    \varepsilon_c(\vec{x}) = |\varepsilon_c(\vec{x})|\:e^{\im\varphi_{\varepsilon}(\vec{x})} \quad\text{and}\quad
    \alpha(\vec{x}) = |\alpha(\vec{x})|\:e^{\im\varphi_\alpha(\vec{x})},
\end{align}
with $\varphi_{\varepsilon}(\vec{x})\in{]0,\pi/2[}$ and $\varphi_{\alpha}(\vec{x}) \in \mathbb{R}$.

\begin{proposition}[Coercivity; Dirichlet and Neuman cases]
    \label{thm:coercive:dir:lossyScalar}
    If $\varphi_{\varepsilon}(\vec{x}) \in {[\varphi_{\varepsilon,-},\varphi_{\varepsilon,+}]}$ a.e.~with $0 < \varphi_{\varepsilon,-} \leq \varphi_{\varepsilon,+} < \pi/2$, then the sesquilinear form \eqref{eqn:isotrop:dirichlet:sesqform} of the Dirichlet and Neumann problems is coercive for all coercivity direction $\theta_\mathrm{coe}\in\Theta_\mathrm{coe}$ with $\Theta_\mathrm{coe} \supseteq \left]-\pi/2-\varphi_{\varepsilon,-} , -\pi/2\right[$.
    In addition, the equality holds if $\varphi_{\varepsilon,-}=\min_{\bx\in D}\varphi_{\varepsilon}(\vec{x})$.
\end{proposition}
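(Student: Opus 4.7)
The plan is to reduce to Proposition~\ref{thm:wellpos:coercive:DirNeu} applied to the scalar tensor fields $\EPS = \varepsilon_c\tens{I}_3$ and $\MU = \mu\tens{I}_3$, and to identify the intersection $\Theta := \Theta_{\varepsilon_c} \cap \Theta_{-\mu^{-1}}$ explicitly. Since the Dirichlet and Neumann sesquilinear forms have the isotropic expression~\eqref{eqn:isotrop:dirichlet:sesqform}, the ellipticity of the $3$-by-$3$ tensor fields reduces to the ellipticity of the underlying scalar fields, so Proposition~\ref{thm:wellpos:coercive:DirNeu} yields coercivity for every $\theta_{\mathrm{coe}} \in \Theta$.

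First I would compute $\Theta_{-\mu^{-1}}$. Since $\mu$ is a strictly positive bounded real scalar field, so is $\mu^{-1}$, hence $-\mu^{-1}$ is a strictly negative bounded real scalar field. By the characterization of $\Theta$-sets for real negative scalar fields (as illustrated in Fig.~\ref{fig:elliptScalarField:b}), $\Theta_{-\mu^{-1}} = [-\pi,-\pi/2[ \, \cup \, ]\pi/2,\pi]$.

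Next I would compute (a subset of) $\Theta_{\varepsilon_c}$. The scalar field $\varepsilon_c$ has modulus bounded below by a strictly positive constant (because $\sigma>0$ and $\varphi_\varepsilon>0$ a.e.~together with the $L^\infty$ bound), and phase taking values in $[\varphi_{\varepsilon,-},\varphi_{\varepsilon,+}] \subset {]0,\pi/2[}$, so $\varphi_{\varepsilon,+}-\varphi_{\varepsilon,-}<\pi/2<\pi$. Applying Proposition~\ref{prop:ellipticScalar:corr1} (equivalently, the first case of Corollary~\ref{prop:ellipticScalar:corr3}) gives $\Theta_{\varepsilon_c} \supseteq {]-\pi/2-\varphi_{\varepsilon,-}, \pi/2-\varphi_{\varepsilon,+}[}$. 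Intersecting with $\Theta_{-\mu^{-1}}$, the component in $]\pi/2,\pi]$ drops out (since $\pi/2-\varphi_{\varepsilon,+}<\pi/2$) while the component in $[-\pi,-\pi/2[$ yields exactly ${]-\pi/2-\varphi_{\varepsilon,-},-\pi/2[}$ (since $-\pi/2-\varphi_{\varepsilon,-}>-\pi$). This establishes the claimed inclusion $\Theta_{\mathrm{coe}} \supseteq {]-\pi/2-\varphi_{\varepsilon,-},-\pi/2[}$.

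For the equality claim under the extra hypothesis $\varphi_{\varepsilon,-}=\min_{\bx\in D}\varphi_\varepsilon(\bx)$, I would adapt the argument of Proposition~\ref{prop:ellipticScalar:corr2}, using only the lower-end information. At a point $\bx_-$ where $\varphi_\varepsilon(\bx_-)=\varphi_{\varepsilon,-}$, the ellipticity condition~\eqref{eqn:ellipticitySca} forces $\cos(\theta+\varphi_{\varepsilon,-})>0$, which restricts every admissible $\theta \in \Theta_{\varepsilon_c}$ to lie in $\bigcup_{n\in\mathbb{Z}}{]2n\pi-\pi/2-\varphi_{\varepsilon,-},2n\pi+\pi/2-\varphi_{\varepsilon,-}[}$; intersecting this with $\Theta_{-\mu^{-1}}$ reproduces exactly ${]-\pi/2-\varphi_{\varepsilon,-},-\pi/2[}$, so the inclusion of $\Theta$-sets is an equality. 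The main (mild) obstacle is that Proposition~\ref{prop:ellipticScalar:corr2} is stated with both min and max hypotheses: I have to observe that only the minimum is needed here because the upper bound $-\pi/2$ of $\Theta_{\mathrm{coe}}$ is imposed by $\Theta_{-\mu^{-1}}$ rather than by $\Theta_{\varepsilon_c}$, so the constraint from $\varphi_{\varepsilon,+}$ plays no role in the intersection.
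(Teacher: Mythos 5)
Your proposal follows essentially the same route as the paper's proof: reduce to Proposition~\ref{thm:wellpos:coercive:DirNeu}, identify $\Theta_{-\mu^{-1}}={[-\pi,-\pi/2[}\cup{]\pi/2,\pi]}$ and $\Theta_{\varepsilon_c}\supseteq{]-\pi/2-\varphi_{\varepsilon,-},\pi/2-\varphi_{\varepsilon,+}[}$ via Corollary~\ref{prop:ellipticScalar:corr3}, and intersect; your extra argument for the equality case (only the minimum of $\varphi_\varepsilon$ matters, since the bound $-\pi/2$ comes from $\Theta_{-\mu^{-1}}$) correctly fills in a step the paper merely asserts. The only quibble is your parenthetical justification that $\inf|\varepsilon_c|>0$ (pointwise positivity of $\sigma$ and of the phase does not by itself give a uniform lower bound on the modulus), but this uniform-positivity assumption is likewise implicit in the paper's own treatment.
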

\begin{proof}
    If $\varepsilon_c$ and $-\mu^{-1}$ are simultaneously elliptic, i.e.~the intersection of their $\Theta$-sets is not empty, then the sesquilinear form is coercive thanks to Proposition~\ref{thm:wellpos:coercive:DirNeu},

    By Corollary \ref{prop:ellipticScalar:corr3}, $\varepsilon_c$ is an elliptic field and its $\Theta$-set is $\Theta_{\varepsilon_c} \supseteq {]- \pi/2 - \varphi_{\varepsilon,-} , \pi/2 - \varphi_{\varepsilon,+}[}$.
    Because $\mu$ is a strictly positive elliptic real field, the field $-\mu^{-1}$ is a strictly negative elliptic real field and its $\Theta$-set is $\Theta_{-\mu^{-1}} = {[-\pi,-\pi/2[} \cup {]\pi/2,\pi]}$.
    We directly have that the intersection of both sets satisfies
    \begin{align}
        \Theta_{\varepsilon_c}\cap\Theta_{-\mu^{-1}} \ \supseteq \ \left]-\pi/2-\varphi_{\varepsilon,-},-\pi/2\right[,
    \end{align}
    and the equality holds if $\varphi_{\varepsilon,-}$ is the minimum value of $\varphi_{\varepsilon}(\vec{x})$.
\end{proof}
The $\Theta$-sets used in the proof are illustrated in Figure \ref{fig:elliptDir:lossyMedium:a}.

\begin{figure}[tb]
    \centering
    \begin{subfigure}[b]{0.45\textwidth}
        \centering
        \caption{Fields $\varepsilon_c$ and $-\mu^{-1}$}
        \label{fig:elliptDir:lossyMedium:a}
        \begin{tikzpicture}[scale=1.2]
            \draw[thick, ->] (-2.45,0) -- (2.45,0);
            \draw[thick, ->] (0,-2.45) -- (0,2.45);
            \draw (2.1,0.05) node [above right] {\footnotesize$\mathrm{Re}$};
            \draw (0,2.3) node [above right] {\footnotesize$\mathrm{Im}$};

            \draw[color=blue, line width=0.5pt, densely dotted] (75:0) -- (75:2.4);
            \draw[color=blue, line width=0.5pt, densely dotted] (30:0) -- (30:2.4);
            \draw[color=red, line width=0.5pt, densely dotted] (-120:0) -- (-120:1.5);
            \draw[color=red, line width=0.5pt, densely dotted] (15:0) -- (15:1.3);

            \draw[color=blue, line width=1pt, fill=blue!15] (75:2.2) arc (75:30:2.2) -- (30:0.3) arc (30:75:0.3) -- cycle;
            \draw (0.35,0.75) node [above right] {\scriptsize\color{blue}$\varepsilon_c(\bx)$};

            \draw[color=blue, line width=1.2pt] (-0.3,0) -- (-2.2,0);
            \draw[color=blue, line width=1.2pt, -] (-0.3,-0.1) -- (-0.3,0.1);
            \draw[color=blue, line width=1.2pt, -] (-2.2,-0.1) -- (-2.2,0.1);
            \draw (-1.0,0) node [above left] {\scriptsize\color{blue}$-\mu^{-1}(\bx)$};

            \draw[color=red, line width=1pt] (-120:0.8) arc (-120:15:0.8);
            \draw[color=red, line width=1pt, fill=white] (-120:0.8) circle (0.05);
            \draw[color=red, line width=1pt, fill=white] (15:0.8) circle (0.05);

            \draw[color=red, line width=1pt] (90:1) arc (90:270:1);
            \draw[color=red, line width=1pt, fill=white] (90:1) circle (0.05);
            \draw[color=red, line width=1pt, fill=white] (270:1) circle (0.05);


            \draw (-45:0.7) node [below right] {\color{red}\scriptsize$\Theta_{\varepsilon_c}$};
            \draw (-150:0.8) node [below left] {\color{red}\scriptsize$\Theta_{-\mu^{-1}}$};

            \draw[color=black!50!white, line width=0.5pt, ->] (0:1.6) arc (0:30:1.6);
            \draw (10:1.5) node [above right] {\scriptsize\color{black!50!white}$\beta_1$};
            \draw[color=black!50!white, line width=0.5pt, <-] (75:1.6) arc (75:90:1.6);
            \draw (81:1.6) node [above] {\scriptsize\color{black!50!white}$\beta_2$};
            \draw[color=black!50!white, line width=0.5pt, ->] (0:1.1) arc (0:15:1.1);
            \draw (-4:1.05) node [above right] {\scriptsize\color{black!50!white}$\beta_2$};
            \draw[color=black!50!white, line width=0.5pt, ->] (-90:1.2) arc (-90:-120:1.2);
            \draw (-105:1.2) node [below] {\scriptsize\color{black!50!white}$\beta_1$};

            \draw (0.3,-1.2) node [below right] {\color{black!50!white}\fbox{\scriptsize\color{black!50!white}
                    $\begin{aligned}
                            \beta_1 & =\varphi_{\varepsilon,-}       \\
                            \beta_2 & =\pi/2-\varphi_{\varepsilon,+}
                        \end{aligned}$}};
        \end{tikzpicture}
    \end{subfigure}
    \qquad
    \begin{subfigure}[b]{0.45\textwidth}
        \centering
        \caption{Field $\alpha$}
        \label{fig:elliptDir:lossyMedium:b}
        \begin{tikzpicture}[scale=1.2]

            \draw[color=blue, line width=0.5pt, densely dotted] (190:0) -- (190:2.4);
            \draw[color=blue, line width=0.5pt, densely dotted] (25:0) -- (25:2.4);
            \draw[color=blue, line width=1pt, fill=blue!15] (190:1.7) arc (190:25:1.7) -- (25:0.6) arc (25:190:0.6) -- cycle;
            \draw (0.25,0.65) node [above right] {\scriptsize\color{blue}$\alpha(\bx)$};

            \draw[thick, ->] (-2.45,0) -- (2.45,0);
            \draw[thick, ->] (0,-2.45) -- (0,2.45);
            \draw (2.1,0.05) node [above right] {\footnotesize$\mathrm{Re}$};
            \draw (0,2.3) node [above right] {\footnotesize$\mathrm{Im}$};

            \draw[color=red, line width=0.5pt, densely dotted] (-115:0) -- (-115:1.8);
            \draw[color=red, line width=0.5pt, densely dotted] (-100:0) -- (-100:1.8);

            \draw[color=red, line width=1pt] (-115:0.8) arc (-115:-100:0.8);
            \draw[color=red, line width=1pt, fill=white] (-115:0.8) circle (0.05);
            \draw[color=red, line width=1pt, fill=white] (-100:0.8) circle (0.05);
            \draw (-115:0.8) node [left] {\color{red}\scriptsize$\Theta_{\alpha}$};

            \draw[color=black!50!white, line width=0.5pt, ->] (0:1.5) arc (0:25:1.5);
            \draw (13:1.5) node [right] {\scriptsize\color{black!50!white}$\eta_1$};
            \draw[color=black!50!white, line width=0.5pt, ->] (180:1.8) arc (180:190:1.8);
            \draw (186:1.8) node [left] {\scriptsize\color{black!50!white}$\eta_2$};
            \draw[color=black!50!white, line width=0.5pt, ->] (-90:1.1) arc (-90:-115:1.1);
            \draw (-108:1.1) node [below] {\scriptsize\color{black!50!white}$\eta_1$};
            \draw[color=black!50!white, line width=0.5pt, ->] (-90:1.5) arc (-90:-100:1.5);
            \draw (-95:1.5) node [below] {\scriptsize\color{black!50!white}$\eta_2$};

            \draw (0.3,-1.2) node [below right] {\color{black!50!white}\fbox{\scriptsize\color{black!50!white}
                    $\begin{aligned}
                            \eta_1 & =\varphi_{\alpha,-}     \\
                            \eta_2 & =\varphi_{\alpha,+}-\pi
                        \end{aligned}$}};
        \end{tikzpicture}
    \end{subfigure}
    \caption{Representation of the $\Theta$-sets (in red) on the complex plane for the fields $\varepsilon_c(\bx)$, $-\mu^{-1}(\bx)$ and $\alpha(\bx)$ (in blue) for a non-homogeneous isotropic lossy medium.
        The fields have values in regions of the complex plan that are colored blue.}
    \label{fig:elliptDir:lossyMedium}
\end{figure}
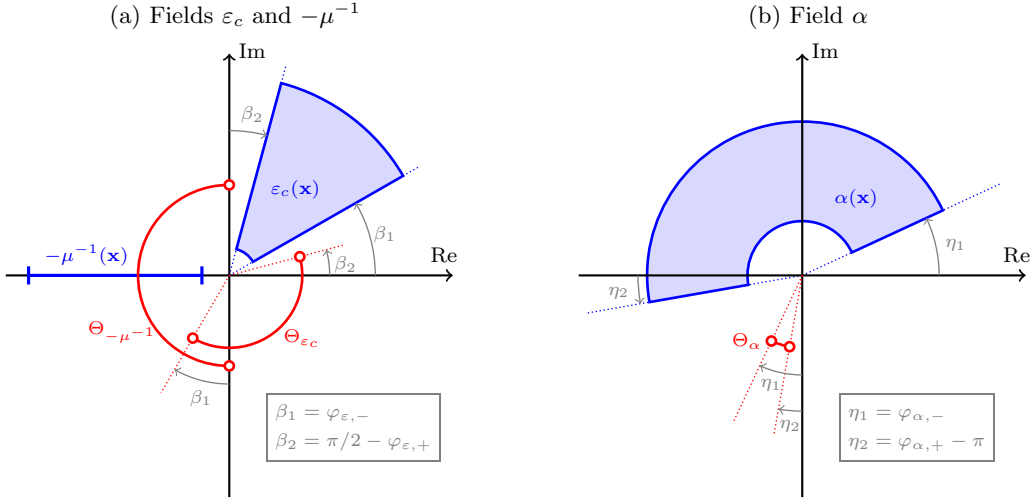

\begin{proposition}[Coercivity; Robin case]
    \label{thm:coercive:rob:lossyScalar}
    If $\varphi_{\varepsilon}(\vec{x}) \in {[\varphi_{\varepsilon,-},\varphi_{\varepsilon,+}]}$ and $\varphi_{\alpha}(\vec{x}) \in {[\varphi_{\alpha,-},\varphi_{\alpha,+}]}$ a.e.~with
    $0 < \varphi_{\varepsilon,-} \leq \varphi_{\varepsilon,+} < \pi/2$,
    $0 < \varphi_{\alpha,-} \leq \varphi_{\alpha,+} < \pi+\varphi_{\varepsilon,-}$ and
    $\varphi_{\alpha,+} - \varphi_{\alpha,-} < \pi$,
    then the sesquilinear form \eqref{eqn:isotrop:robin:sesqform} of the Robin problem is coercive for all coercivity direction $\theta_\mathrm{coe}\in\Theta_\mathrm{coe}$ with
    \begin{align}
        \Theta_\mathrm{coe} \ \supseteq \ \big]-\pi/2-\min(\varphi_{\varepsilon,-},\varphi_{\alpha,-}), -\pi/2 - \max(0,\varphi_{\alpha,+}-\pi)\big[.
    \end{align}
    In addition, the equality holds if $\varphi_{\varepsilon,-}=\min_{\bx\in D}\varphi_{\varepsilon}(\vec{x})$, $\varphi_{\alpha,-}=\min_{\bx\in D}\varphi_{\alpha}(\vec{x})$ and $\varphi_{\alpha,+}=\max_{\bx\in D}\varphi_{\alpha}(\vec{x})$.
\end{proposition}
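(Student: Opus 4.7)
The plan is to apply Proposition~\ref{thm:wellpos:coercive} with the scalar fields $\varepsilon_c$, $-\mu^{-1}$ and $\alpha$ playing the roles of $\EPS$, $-\MU^{-1}$ and $\ALPHA$. The whole task then reduces to identifying the intersection $\Theta_{\varepsilon_c}\cap\Theta_{-\mu^{-1}}\cap\Theta_{\alpha}$ and showing that it contains the advertised interval, with equality under the stated optimality hypotheses on $\varphi_{\varepsilon,-}$ and $\varphi_{\alpha,\pm}$.

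First, for $\varepsilon_c$ and $-\mu^{-1}$ the computation already carried out in the proof of Proposition~\ref{thm:coercive:dir:lossyScalar} yields $\Theta_{\varepsilon_c}\cap\Theta_{-\mu^{-1}}\supseteq {]-\pi/2-\varphi_{\varepsilon,-},-\pi/2[}$, with equality when $\varphi_{\varepsilon,-}=\min_{\bx\in D}\varphi_\varepsilon(\bx)$. Next, I would apply Corollary~\ref{prop:ellipticScalar:corr3} to the scalar field $\alpha$ with $(\varphi_-,\varphi_+)=(\varphi_{\alpha,-},\varphi_{\alpha,+})$. Under the hypothesis $0<\varphi_{\alpha,-}\leq\varphi_{\alpha,+}<\pi+\varphi_{\varepsilon,-}<3\pi/2$, two sub-cases arise: if $\varphi_{\alpha,-}\leq \pi/2$ the first bullet of the corollary gives the connected piece $\Theta_\alpha\supseteq {]-\pi/2-\varphi_{\alpha,-},\pi/2-\varphi_{\alpha,+}[}$; if $\varphi_{\alpha,-}>\pi/2$ the third bullet gives the disconnected $\Theta_\alpha\supseteq {[-\pi,\pi/2-\varphi_{\alpha,+}[}\cup {]3\pi/2-\varphi_{\alpha,-},\pi]}$.

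Then I would intersect these expressions with the window ${]-\pi/2-\varphi_{\varepsilon,-},-\pi/2[}$ obtained in the first step. In the disconnected sub-case the positive branch ${]3\pi/2-\varphi_{\alpha,-},\pi]}$ lies outside the window (since $\varphi_{\alpha,-}<3\pi/2$ implies $3\pi/2-\varphi_{\alpha,-}>0$) and can be discarded. A short endpoint check using $\varphi_{\varepsilon,-}<\pi/2$ shows that the left endpoint of the intersection becomes $\max(-\pi/2-\varphi_{\varepsilon,-},-\pi/2-\varphi_{\alpha,-})=-\pi/2-\min(\varphi_{\varepsilon,-},\varphi_{\alpha,-})$ in both sub-cases; in the disconnected sub-case one has $\varphi_{\alpha,-}>\pi/2>\varphi_{\varepsilon,-}$, so the minimum is automatically $\varphi_{\varepsilon,-}$, consistent with the first step. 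The right endpoint is $\min(-\pi/2,\pi/2-\varphi_{\alpha,+})=-\pi/2-\max(0,\varphi_{\alpha,+}-\pi)$, and the hypothesis $\varphi_{\alpha,+}-\varphi_{\alpha,-}<\pi$ keeps this interval non-empty. Assembling these pieces and invoking Proposition~\ref{thm:wellpos:coercive} yields the claimed inclusion; the equality under the optimality hypotheses follows because in that case the final sentence of Corollary~\ref{prop:ellipticScalar:corr3} upgrades the $\Theta$-set inclusions to equalities, and the endpoint arithmetic above is exact.

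The main obstacle will be purely bookkeeping: keeping track of the case split on $\varphi_{\alpha,-}$ and of the sign of $\varphi_{\alpha,+}-\pi$ while consolidating everything into the closed-form expression involving $\min$ and $\max$. There is no analytical difficulty beyond the two tools already available: the simultaneous-ellipticity hypotheses~\ref{hyp:mat:3}--\ref{hyp:mat:4} required by Proposition~\ref{thm:wellpos:coercive} reduce in the isotropic scalar setting to the non-emptiness of the pairwise $\Theta$-intersections, which is an immediate by-product of the interval computations above.
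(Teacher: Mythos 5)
Your proposal is correct and follows essentially the same route as the paper: invoke Proposition~\ref{thm:wellpos:coercive}, reuse the Dirichlet/Neumann computation for $\Theta_{\varepsilon_c}\cap\Theta_{-\mu^{-1}}$, apply Corollary~\ref{prop:ellipticScalar:corr3} to $\alpha$ with the same case split at $\varphi_{\alpha,-}=\pi/2$, and consolidate the endpoints via $\min(-\pi/2,\pi/2-\varphi_{\alpha,+})=-\pi/2-\max(0,\varphi_{\alpha,+}-\pi)$. The only cosmetic difference is that the paper first verifies non-emptiness of the intersection via the $n=0$ interval of Proposition~\ref{prop:ellipticScalar:corr1} before the finer characterization, whereas you check non-emptiness at the end (where, for completeness, both hypotheses $\varphi_{\alpha,+}-\varphi_{\alpha,-}<\pi$ and $\varphi_{\alpha,+}<\pi+\varphi_{\varepsilon,-}$ are used).
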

\begin{proof}
    If $\varepsilon_c$, $-\mu^{-1}$ and $\alpha$ are simultaneously elliptic, i.e.~the intersection of the $\Theta$-sets of $\varepsilon_c$, $-\mu^{-1}$ and $\alpha$ is not empty, then the sesquilinear form is coercive thanks to Proposition~\ref{thm:wellpos:coercive}.
    Following the proof of Proposition~\ref{thm:coercive:rob:lossyScalar}, we already have that
    \begin{align}
        \Theta_{\varepsilon}\cap\Theta_{-\mu^{-1}}
        \ \supseteq \
        {]-\pi/2-\varphi_{\varepsilon,-},-\pi/2[},
        \label{eqn:intersection}
    \end{align}
    where the equality holds if $\varphi_{\varepsilon,-}=\min_{\bx\in D}\varphi_{\varepsilon}(\vec{x})$.

    By Proposition \ref{prop:ellipticScalar:corr1}, if $\varphi_{\alpha,-} \leq \varphi_{\alpha,+}$ and $\varphi_{\alpha,+} - \varphi_{\alpha,-} < \pi$, the scalar field $\alpha$ is elliptic, and every angle
    \begin{align}
        \theta_\alpha \ \in \ \bigcup_{n\in\mathbb{Z}} {\big] 2n\pi - \pi/2 - \varphi_{\alpha,-} \ , \ 2n\pi + \pi/2 - \varphi_{\alpha,+} \big[}
    \end{align}
    is an ellipticity direction of $\alpha$.
    For a given $n$, the intersection between the set \eqref{eqn:intersection} and the interval ${] 2n\pi - \pi/2 - \varphi_{\alpha,-} , 2n\pi + \pi/2 - \varphi_{\alpha,+} [}$ is not empty if and only if
    \begin{align}
        2n\pi < \varphi_{\alpha,-}
        \qquad\text{and}\qquad
        \varphi_{\alpha,+} < 2n\pi + \pi + \varphi_{\varepsilon,-}.
    \end{align}

    In particular, since $0 < \varphi_{\alpha,-}$ and $\varphi_{\alpha,+} < \pi + \varphi_{\varepsilon,-}$, the intersection is not empty for $n=0$.
    Therefore, under these conditions, the sesquilinear form is coercive.

    Let us characterize more precisely the set of the coercivity directions.
    If $\varphi_{\alpha,-} \in {]0,\pi/2]}$ and $\varphi_{\alpha,+}\in{]0,\pi + \varphi_{\varepsilon,-}[}$, then, by Corollary \ref{prop:ellipticScalar:corr3},
    \begin{align}
        \Theta_\alpha
        \ \supseteq \
        {]-\pi/2 - \varphi_{\alpha,-} , \pi/2 - \varphi_{\alpha,+}[}
    \end{align}
    and the intersection between all the $\Theta$-sets is such that
    \begin{align}
        \Theta_{\varepsilon}\cap\Theta_{-\mu^{-1}}\cap\Theta_\alpha
        \ \supseteq \
        {]-\pi/2-\min(\varphi_{\varepsilon,-},\varphi_{\alpha,-}) , \min(-\pi/2,\pi/2 - \varphi_{\alpha,+})[}.
    \end{align}
    On the other hand, if $\varphi_{\alpha,-} \in {]\pi/2,\pi + \varphi_{\varepsilon,-}[}$ and $\varphi_{\alpha,+}\in{]\pi/2,\pi + \varphi_{\varepsilon,-}[}$, then, by Corollary \ref{prop:ellipticScalar:corr3},
    \begin{align}
        \Theta_\alpha
        \ \supseteq \
        {[-\pi , \pi/2 - \varphi_{\alpha,+}[}
        \ \cup \
        {]3\pi/2 - \varphi_{\alpha,-} , \pi]}
    \end{align}
    and the intersection is such that
    \begin{align}
        \Theta_{\varepsilon}\cap\Theta_{-\mu^{-1}}\cap\Theta_\alpha
        \ \supseteq \
        {]-\pi/2-\varphi_{\varepsilon,-},\min(-\pi/2,\pi/2 - \varphi_{\alpha,+})[}.
    \end{align}
    The equality holds if $\varphi_{\varepsilon,-}=\min_{\bx\in D}\varphi_{\varepsilon}(\vec{x})$, and if $\varphi_{\alpha,-}$ and $\varphi_{\alpha,+}$ are the minimal and maximal values of $\varphi_{\alpha}(\vec{x})$, respectively.
    Since $\min(-\pi/2,\pi/2 - \varphi_{\alpha,+}) = -\pi/2 - \max(0,\varphi_{\alpha,+}-\pi)$, the result holds.
\end{proof}
The $\Theta$-sets used in the proof are illustrated in Figure \ref{fig:elliptDir:lossyMedium}.

Coercivity constants are easily obtained for cases with constant material properties.
\begin{corollary}[Coercivity; Dirichlet and Neumann case]
    In addition to the assumptions of Proposition~\ref{thm:coercive:dir:lossyScalar}, the fields $\varepsilon_c$ and $\mu$ are constant, then, for all coercivity direction $\theta_\mathrm{coe}\in\Theta_\mathrm{coe}$, the coercivity constant of the sesquilinear form $a$ can be chosen equal to
    \begin{align}
        C_\mathrm{coe} =
        \min\left( - \mu^{-1} \cos(\theta_\mathrm{coe}), \
        \omega^2 |\varepsilon_c| \cos(\theta_\mathrm{coe}+\varphi_{\varepsilon_c})
        \right).
    \end{align}
\end{corollary}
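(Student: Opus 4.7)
The plan is to specialize directly to the constant-coefficient case, where the sesquilinear form decouples into a curl-curl part and a mass part, each scaled by a single complex (or real) constant, so that $\Re\{-e^{\im\theta_{\mathrm{coe}}}a(\bu,\bu)\}$ splits into two explicitly computable terms. First I would observe that, since $\mu$ and $\varepsilon_c$ are constant, equation~\eqref{eqn:isotrop:dirichlet:sesqform} at $\bv=\bu$ gives
\begin{align}
a(\bu,\bu) = \mu^{-1}\|\curl\bu\|_{\BL^2(\Omega)}^2 - \omega^2\varepsilon_c\|\bu\|_{\BL^2(\Omega)}^2,
\end{align}
where the two $\BL^2$-norms are real nonnegative numbers.

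Then, writing $\varepsilon_c=|\varepsilon_c|e^{\im\varphi_{\varepsilon_c}}$ and multiplying by $-e^{\im\theta_{\mathrm{coe}}}$, I would take real parts to obtain
\begin{align}
\Re\{-e^{\im\theta_{\mathrm{coe}}}a(\bu,\bu)\}
= -\mu^{-1}\cos(\theta_{\mathrm{coe}})\,\|\curl\bu\|_{\BL^2(\Omega)}^2
+ \omega^2|\varepsilon_c|\cos(\theta_{\mathrm{coe}}+\varphi_{\varepsilon_c})\,\|\bu\|_{\BL^2(\Omega)}^2.
\end{align}
The next step is to check that both coefficients are strictly positive for $\theta_{\mathrm{coe}}\in\Theta_{\mathrm{coe}}$. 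By Proposition~\ref{thm:coercive:dir:lossyScalar}, we may take $\theta_{\mathrm{coe}}\in{]-\pi/2-\varphi_{\varepsilon,-},-\pi/2[}$, which yields $\cos(\theta_{\mathrm{coe}})<0$ (hence $-\mu^{-1}\cos(\theta_{\mathrm{coe}})>0$ since $\mu>0$), while $\theta_{\mathrm{coe}}+\varphi_{\varepsilon_c}\in{]-\pi/2,0[}$ since $\varphi_{\varepsilon_c}\in{]\varphi_{\varepsilon,-},\varphi_{\varepsilon,+}[}\subset{]0,\pi/2[}$, hence $\cos(\theta_{\mathrm{coe}}+\varphi_{\varepsilon_c})>0$.

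Having both coefficients positive, I would conclude by bounding each by their minimum and recombining the two squared norms into the graph norm:
\begin{align}
\Re\{-e^{\im\theta_{\mathrm{coe}}}a(\bu,\bu)\}
\geq \min\!\big(-\mu^{-1}\cos(\theta_{\mathrm{coe}}),\,\omega^2|\varepsilon_c|\cos(\theta_{\mathrm{coe}}+\varphi_{\varepsilon_c})\big)\,\|\bu\|_{\BH(\curl,\Omega)}^2,
\end{align}
which is the claimed coercivity estimate. There is essentially no obstacle beyond a sign check: the only subtlety is verifying that the two cosines have the right signs on the coercivity range identified in Proposition~\ref{thm:coercive:dir:lossyScalar}, and this follows immediately from the location of that range in the complex plane as pictured in Figure~\ref{fig:elliptDir:lossyMedium:a}.
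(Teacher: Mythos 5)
Your proposal is correct and follows essentially the same route as the paper: expand $a(\bu,\bu)$ with the constant coefficients, multiply by $-e^{\im\theta_{\mathrm{coe}}}$, take real parts to isolate the two cosine coefficients, verify their signs on the coercivity range, and bound below by the minimum times the $\BH(\curl,\Omega)$-norm. No gaps; the sign verification you give matches the paper's "direct inspection" step.
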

\begin{proof}
    For all $\theta_\mathrm{coe}\in\Theta_\mathrm{coe}$, the sesquilinear form \eqref{eqn:isotrop:dirichlet:sesqform} gives successively
    \begin{align}
        \Re\{a(-e^{\im\theta_\mathrm{coe}}\bu,\bu)\}
         & = - \Re\{e^{\im\theta_\mathrm{coe}} (\mu^{-1} \curl\bu,\curl\bu)_\Omega\}
        + \omega^2 \Re\{e^{\im\theta_\mathrm{coe}} (\varepsilon_c \bu,\bu)_\Omega\}                                           \\
         & = - \Re\{\mu^{-1} e^{\im\theta_\mathrm{coe}}\} \left\|\curl\bu\right\|_{\BL^2(\Omega)}^2
        + \omega^2 \Re\{|\varepsilon_c| e^{\im\varphi_{\varepsilon_c}} e^{\im\theta_\mathrm{coe}}\} \|\bu\|_{\BL^2(\Omega)}^2 \\
         & =
        - \mu^{-1} \cos(\theta_\mathrm{coe}) \left\|\curl\bu\right\|_{\BL^2(\Omega)}^2
        + \omega^2 |\varepsilon_c| \cos(\theta_\mathrm{coe}+\varphi_{\varepsilon_c}) \left\|\bu\right\|_{\BL^2(\Omega)}^2     \\
         & \geq C_\mathrm{coe} \left\|\bu\right\|_{\BH(\curl,\Omega)}^2,
    \end{align}
    which gives the result.
    By direct inspection, we get $\cos(\theta_\mathrm{coe})<0$ and $\cos(\theta_\mathrm{coe}+\varphi_{\varepsilon_c})>0$, and then $C_\mathrm{coe}>0$.
\end{proof}

\begin{corollary}[Coercivity; Robin case]
    In addition to the assumptions of Proposition~\ref{thm:coercive:rob:lossyScalar}, the fields $\varepsilon_c$, $\mu$ and $\alpha$ are constant, then, for all coercivity direction $\theta_\mathrm{coe}\in\Theta_\mathrm{coe}$, the coercivity constant of the sesquilinear form $a^+$ can be chosen equal to
    \begin{align}
        C_\mathrm{coe}^+ =
        \min\left( - \mu^{-1} \cos(\theta_\mathrm{coe}), \
        \omega^2 |\varepsilon_c| \cos(\theta_\mathrm{coe}+\varphi_{\varepsilon_c}), \
        |\alpha| \cos(\theta_\mathrm{coe}+\varphi_\alpha)
        \right).
    \end{align}
\end{corollary}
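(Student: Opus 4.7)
The plan is to mirror the proof of the preceding corollary, simply adding the boundary impedance contribution. For a fixed $\theta_\mathrm{coe}\in\Theta_\mathrm{coe}$ and any $\bu\in\BH^+(\curl,\Omega)$, I would expand $\Re\{-e^{\im\theta_\mathrm{coe}} a^+(\bu,\bu)\}$ using the explicit form of $a^+$ in~\eqref{eqn:isotrop:robin:sesqform}. Because the three scalar coefficients $\mu^{-1}$, $\varepsilon_c$ and $\alpha$ are constants, they factor out of the corresponding inner products, splitting the quantity into three independent contributions.

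For the first two terms the calculation is identical to the Dirichlet/Neumann corollary: I obtain the lower bounds $-\mu^{-1}\cos(\theta_\mathrm{coe})\|\curl\bu\|^2_{\BL^2(\Omega)}$ and $\omega^2|\varepsilon_c|\cos(\theta_\mathrm{coe}+\varphi_{\varepsilon_c})\|\bu\|^2_{\BL^2(\Omega)}$. For the new boundary term, writing $\alpha=|\alpha|e^{\im\varphi_\alpha}$ and pulling the scalar constant out of the surface pairing yields
\begin{align}
\Re\{e^{\im\theta_\mathrm{coe}}\,\alpha\,(\gamma^T\bu,\gamma^T\bu)_\Gamma\} = |\alpha|\cos(\theta_\mathrm{coe}+\varphi_\alpha)\,\|\gamma^T\bu\|^2_{\BL^2_t(\Gamma)}.
\end{align}

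Summing the three contributions and bounding them below by the minimum of the three cosine-weighted coefficients, I would invoke the definition of the $\BH^+(\curl,\Omega)$-norm as the sum of $\|\bu\|^2_{\BL^2(\Omega)}$, $\|\curl\bu\|^2_{\BL^2(\Omega)}$ and $\|\gamma^T\bu\|^2_{\BL^2(\Gamma)}$ to conclude
\begin{align}
\Re\{-e^{\im\theta_\mathrm{coe}} a^+(\bu,\bu)\} \ \geq \ C_\mathrm{coe}^+\,\|\bu\|^2_{\BH^+(\curl,\Omega)}.
\end{align}
The strict positivity of $C_\mathrm{coe}^+$ is automatic: since $\theta_\mathrm{coe}\in\Theta_\mathrm{coe}\subseteq\Theta_{\varepsilon_c}\cap\Theta_{-\mu^{-1}}\cap\Theta_\alpha$, Definition~\ref{def:ellipticitySca} applied to each of the three scalar fields forces all three cosines to be strictly positive. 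There is no real obstacle here: the argument is a direct extension of the preceding corollary, with the additional boundary term splitting off cleanly thanks to the constancy of $\alpha$ and the presence of $\|\gamma^T\bu\|^2_{\BL^2(\Gamma)}$ as a component of the $\BH^+(\curl,\Omega)$-norm.
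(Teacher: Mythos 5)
Your proposal is correct and follows essentially the same route as the paper's proof: expand $\Re\{-e^{\im\theta_\mathrm{coe}}a^+(\bu,\bu)\}$, pull the constant coefficients $\mu^{-1}$, $\varepsilon_c$, $\alpha$ out of the three inner products, identify the cosine factors via the polar forms, and bound below by the minimum against the $\BH^+(\curl,\Omega)$-norm, with positivity of the three cosines following from $\theta_\mathrm{coe}$ lying in the relevant ellipticity direction sets (the paper simply says ``by direct inspection''). No gaps.
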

\begin{proof}
    For all $\theta_\mathrm{coe}\in\Theta_\mathrm{coe}$, the sesquilinear form \eqref{eqn:isotrop:robin:sesqform} gives successively
    \begin{align}
        \Re\{a^+(-e^{\im\theta_\mathrm{coe}}\bu,\bu)\}
         & = - \Re\{e^{\im\theta_\mathrm{coe}} (\mu^{-1} \curl\bu,\curl\bu)_\Omega\}
        + \omega^2 \Re\{e^{\im\theta_\mathrm{coe}} (\varepsilon_c \bu,\bu)_\Omega\}                                           \\
         & \qquad\qquad
        + \Re\{e^{\im\theta_\mathrm{coe}} (\alpha \gamma^T\bu,\gamma^T\bu)_\Gamma\}                                           \\
         & = - \Re\{\mu^{-1} e^{\im\theta_\mathrm{coe}}\} \left\|\curl\bu\right\|_{\BL^2(\Omega)}^2
        + \omega^2 \Re\{|\varepsilon_c| e^{\im\varphi_{\varepsilon_c}} e^{\im\theta_\mathrm{coe}}\} \|\bu\|_{\BL^2(\Omega)}^2 \\
         & \qquad\qquad
        + \Re\{|\alpha| e^{\im\varphi_\alpha} e^{\im\theta_\mathrm{coe}}\} \left\|\gamma^T\bu\right\|_{\BL^2(\Gamma)}^2       \\
         & =
        - \mu^{-1} \cos(\theta_\mathrm{coe}) \left\|\curl\bu\right\|_{\BL^2(\Omega)}^2
        + \omega^2 |\varepsilon_c| \cos(\theta_\mathrm{coe}+\varphi_{\varepsilon_c}) \left\|\bu\right\|_{\BL^2(\Omega)}^2     \\
         & \qquad\qquad
        + |\alpha| \cos(\theta_\mathrm{coe}+\varphi_\alpha) \left\|\gamma^T\bu\right\|_{\BL^2(\Gamma)}^2                      \\
         & \geq C_\mathrm{coe}^+ \left\|\bu\right\|_{\BH^+(\curl,\Omega)}^2,
    \end{align}
    which gives the result. By direct inspection, $C_\mathrm{coe}^+>0$.
\end{proof}


\subsection{Geometric media}

Anisotropic metamaterials can be obtained by coordinate transformations in particular reference frames.
Cloaks of invisibility and perfectly matched layers (PMLs) are typical examples.

Let us consider a spherical layer centered at the origin.
The interior and exterior radius of the layer are $R_1$ and $R_2$, respectively, with $0<R_1<R_2$.
The material tensors obtained by using a transformation in spherical coordinates can be written as $\EPS = \varepsilon_0 \tens{\Lambda}$ and $\MU = \mu_0 \tens{\Lambda}$ with
\begin{align}
    \tens{\Lambda} & = \frac{s_2^2}{s_1} (\hat{\br}\otimes\hat{\br}) + s_1 (\tens{I}_3 - \hat{\br}\otimes\hat{\br}),
\end{align}
where $r\in{]R_1,R_2[}$ is the radial coordinate, $\hat{\br}$ is the radial unit vector, and $s_1(r)$ and $s_2(r)$ are spatially-varying coefficients.
This tensor field is normal.
For each $r\in{]R_1,R_2[}$, the eigenvalues are $s_1$ (with multiplicity $2$) and $s_2^2/s_1$.

The goal of a cloak of invisibility is to hide an object from any incident waves.
The object is placed inside the cloak (i.e.~inside the sphere of radius $R_1$), and all incident waves must be perfectly transmitted through the cloak without seeing the object.
For a spherical cloak, the coefficients are
\begin{align}
    s_1(r) = 1/(1-R_1/R_2)
    \qquad\text{and}\qquad
    s_2(r) = (1-R_1/r)/(1-R_1/R_2),
\end{align}
see e.g.~\cite{pendry2006controlling} and \cite[chapter 8]{cai2010optical}.
The eigenvalues are strictly positive, and then the material tensors are symmetric positive definite, inside the layer (i.e.~for $r\in{]R_1,R_2[}$).
However, it is not possible to find a strictly positive lower bound for $s_2$, because $s_2$ is smooth and equal to $0$ at $r=R_1$.
If we consider a slightly modified model where $s_2(r)$ is replaced by $\widetilde{s}_2(r) = \max(s_2(r),\epsilon)$ with $0<\epsilon\ll1$, then the material tensors are elliptic.

The PMLs are artificial absorbing layers that are commonly used to truncate computational domains.
They can be obtained by using coordinate transformations with an imaginary part in the complex plane, see e.g.~\cite{chew1997complex,vaziri2019perfectly}.
For a spherical PML located within the layer $]R_1,R_2[$ to truncate the spherical domain of radius $R_1$, the coefficients $s_1$ and $s_2$ are
\begin{align}
    s_1(r) = 1 + \im\sigma(r)/\kappa
    \qquad\text{and}\qquad
    s_2(r) = 1 + \im/(\kappa r) \int_{R_1}^r \sigma(r')\:dr',
\end{align}
where $\sigma(r)$ is a positive real function called the \emph{absorbing function}, and $\kappa=\sqrt{\varepsilon_0 \mu_0}\:\omega$.
We can write $s_1=|s_1|\:e^{\im\varphi_1}$ and $s_2=|s_2|\:e^{\im\varphi_2}$ with $\varphi_1,\varphi_2\in{[0,\pi/2[}$, because the real and imaginary parts of the coefficients are, respectively, strictly positive and positive.
Then, the eigenvalues of $\tens{\Lambda}$ are $s_1=|s_1|\:e^{\im\varphi_1}$ and $s_2^2/s_1 = |s_2|^2/|s_1|\:e^{\im(2\varphi_2-\varphi_1)}$.
Note that the angle between the phases of these eigenvalues, i.e.~$2|\varphi_2-\varphi_1|$, is always strictly lower than $\pi$.
Therefore, if the absorbing function is bounded, all the eigenvalues belong to an open half space of $\mathbb{C}$ that does not contain zero, and then the tensors are elliptic (see Proposition \ref{prop:ellipticityTensNorm}).
On the other hand, unbounded absorbing functions are often used to improve the accuracy of domain truncation, e.g.~\cite{bermudez2008exact}.
Problems with unbounded media do not fit into the framework proposed in this work, but some analysis could be done using weighted functional spaces.


\subsection{Gyrotropic media}

A medium is \emph{gyroelectric} or \emph{gyromagnetic} if there is a rectangular reference frame where $\EPS$ or $\MU$, respectively, can be represented as
\begin{align}
    \EPS = \varepsilon_0
    \begin{bmatrix}
        \varepsilon_1     & \im\varepsilon_2 & 0             \\
        -\im\varepsilon_2 & \varepsilon_1    & 0             \\
        0                 & 0                & \varepsilon_3 \\
    \end{bmatrix}
    \qquad\text{or}\qquad
    \MU = \mu_0
    \begin{bmatrix}
        \mu_1     & \im\mu_2 & 0     \\
        -\im\mu_2 & \mu_1    & 0     \\
        0         & 0        & \mu_3 \\
    \end{bmatrix},
\end{align}
see e.g.~\cite[section 6.6]{someda2017electromagnetic} and \cite{geng2011extended}.
The eigenvalues of these matrices are $\{\varepsilon_0(\varepsilon_1\pm\varepsilon_2),\varepsilon_0\varepsilon_3\}$ and $\{\mu_0(\mu_1\pm\mu_2),\mu_0\mu_3\}$.

Magnetized ferrite is an example of gyromagnetic medium.
We have $\EPS = \varepsilon \tens{I}_3$ and
\begin{align}
    \mu_1 = 1 - (\omega_0\omega_M)/(\omega^2-\omega_0^2),
    \qquad
    \mu_2 = (\omega\omega_M)/(\omega^2-\omega_0^2)
    \qquad\text{and}\qquad
    \mu_3 = 1,
\end{align}
where $\omega_0$ is the gyromagnetic response frequency and $\omega_M$ is the saturation magnetization frequency, see e.g.~\cite[section 8.18]{ishimaru2017electromagnetic} and \cite[section 6.9]{someda2017electromagnetic}.
The eigenvalues of $\MU$ are $\{\mu_0+\mu_0\omega_M/(\omega_0\pm\omega),\mu_0\}$.
The matrix $\MU$ is Hermitian positive definite if and only if $1+\omega_M/(\omega_0\pm\omega)>0$.

Cold plasma with a static magnetic field ensuring plasma stability is an example of gyroelectric medium, see e.g.~\cite[section 6.2]{bellan2008fundamentals}, \cite[section 6.8]{someda2017electromagnetic} and \cite{stix1992waves}.
S\'ebelin et al~\cite{sebelin1997uniqueness} studied the following model.
The magnetic permeability tensor is $\MU = \mu_0 \tens{I}_3$ and the coefficients of the dielectric tensor $\EPS$ are
\begin{align}
    \varepsilon_1 = 1 + \sum_\varsigma  \frac{\alpha}{\omega} \frac{\omega_{p\varsigma}^2}{\omega_{c\varsigma}^2-\alpha^2},
    \qquad
    \varepsilon_2 = \sum_\varsigma \frac{\omega_{c\varsigma}}{\omega} \frac{\omega_{p\varsigma}^2}{\omega_{c\varsigma}^2-\alpha^2}
    \qquad\text{and}\qquad
    \varepsilon_3 = 1 - \sum_\varsigma \frac{\omega_{p\varsigma}^2}{\omega\alpha},
\end{align}
with $\alpha=\omega+\im\nu$, where $\omega_{p\varsigma}$ and $\omega_{c\varsigma}$ are the plasma frequency and the cyclotron frequency of each species $\varsigma$ (ions and electrons), and $\nu>0$ is the electron-ion collision frequency. 
The frequency $\omega_{c\varsigma}$ is negative for electrons.
S\'ebelin et al~\cite{sebelin1997uniqueness} studied the well-posedness of a variational formulation with a coercive sesquilinear form for the Dirichlet problem.
The well-posedness of several alternative formulations was studied by Back et al \cite{back2015electromagnetic} for a model with an additional Landau damping effect.
For this model, the problem can be studied with our approach, see \cite{chicaud2021analysis,chicaud2021phd}.

\section{Conclusion}

In this work, we have reviewed, extended and applied recent results for the analysis of time-harmonic electromagnetic boundary value problems in elliptic media.
This family of media includes heterogeneous, complex and anisotropic media, which are of practical interest as illustrated by the examples of Section~\ref{sec:applications}.

The considered mathematical framework relies on the use of a general definition of elliptic fields (Definition \ref{def:ellipticityTens}).
We have systematically studied properties of general, normal and hermitian tensor fields, as well as scalar fields, that verify the ellipticity condition.
In particular, we have identified ellipticity directions and, in some cases, we have characterized the sets of all the possible ellipticity directions.

General well-posedness and regularity results have been obtained in \cite{chicaud2021analysis,chicaud2023analysis} by using this framework.
Here, we have more specifically studied conditions such that the problems are coercive.
The coercivity directions of the sesquilinear forms are related to the ellipticity direction of the material fields, and the coercivity constants depend on the constants used in the ellipticity condition of the material fields.
These results could be used to optimize the coercivity conditions, which are related to the stability of the formulations.

While we have mainly studied the electromagnetic problems at the continuous level, our results could be used to analyze numerical methods and domain decomposition methods for solving these problems.
Indeed, these analyses generally depend on the properties of the initial differential problems.
In the coercive case, the coercivity constant of the sesquilinear form plays an important role in the stability of the solution methods.

\section*{Acknowledgement}

This work was supported in part by the \emph{ANR JCJC project WavesDG} (research grant ANR-21-CE46-0010) and by the \emph{Agence de l'Innovation de Défense} [AID] through \emph{Centre Interdisciplinaire d'Etudes pour la D\'efense et la S\'ecurit\'e} [CIEDS] (project 2022 ElectroMath).

\small
\linespread{0.25}
\setlength{\bibsep}{1pt plus 0ex}
\bibliographystyle{abbrvnat}
\bibliography{myrefs}
\addcontentsline{toc}{section}{References}

\end{document}